\newtheorem{theorem}{Theorem}
\theoremstyle{plain}
\newtheorem{corollary}{Corollary}
\newtheorem{proposition}{Proposition}
\newtheorem{remark}{Remark}
\numberwithin{equation}{section}
\begin{document}

\title[A combinatorial model for the free loop]{A combinatorial model for the free loop fibration}

\author{Manuel Rivera}
\address{Manuel Rivera, Department of Mathematics, University of Miami,  1365 Memorial Drive, Coral
Gables, FL 33146 \\and
Departamento de Matem\'aticas, Cinvestav, Av. Instituto Polit\'ecnico Nacional 2508, Col. San Pedro Zacatenco, M\'exico, D.F. CP 07360, M\'exico}
  \email{manuelr@math.miami.edu}
\author{Samson Saneblidze$^\ast$}
\address{Samson Saneblidze, A. Razmadze Mathematical Institute,
I.Javakhishvili Tbilisi State University 6, Tamarashvili  st.,
 Tbilisi 0177, Georgia}
 \email{sane@rmi.ge}

\thanks{$^\ast$This research described in this publication was made possible in part by the grant \\ SRNSF/217614}

\subjclass[2010]{ 55P35, 55U05, 52B05, 18F20}
\keywords{Simplicial sets, free loop space, necklaces, freehedra, (co)Hochschild complex}

\begin{abstract}
We introduce the abstract notion of a closed necklical set in order to describe a functorial combinatorial model of the free loop fibration $\Omega Y\rightarrow \Lambda Y\rightarrow Y$ over the geometric realization $Y=|X|$ of a path connected simplicial set $X.$ In particular, to any path connected simplicial set $X$ we associate a closed necklical set $\widehat{\mathbf{\Lambda}}X$ such that its geometric realization $|\widehat{\mathbf{\Lambda}}X|$, a space built out of glueing ``freehedrical" and ``cubical" cells, is homotopy equivalent to the free loop space $\Lambda Y$ and the differential graded module of chains $C_*(\widehat{\mathbf{\Lambda}}X)$ generalizes the coHochschild chain complex of the chain coalgebra $C_\ast(X).$
\end{abstract}

\maketitle

 \section{Introduction}

Let $Y$ be a connected topological space. The \textit{free loop space} of $Y$ is the topological space $\Lambda Y= Maps(S^1,Y)$ defined as the set of all loops in $Y$, i.e. continuous maps from the circle $S^1$ to $Y$, equipped with the compact open topology. Fixing a point in the circle $* \in S^1$ we obtain a continuous map $\varrho: \Lambda Y \to Y$ defined by evaluating a loop at $*$, namely $\varrho ( \gamma) =\gamma(*)$. The map $\varrho: \Lambda Y \to Y$ is a fibration, which we call the \textit{free loop fibration}, whose fiber at a point $b \in Y$ is the space $\Omega Y= \varrho^{-1}(b)$ consisting of all loops with the base point $b$. Suppose now that $Y$ is the geometric realization of a connected simplicial set $X$. The main goal of this article is to construct a natural combinatorial model for the free loop fibration of $Y$ based on the combinatorics of $X$. In particular, we introduce the notion of a \textit{closed necklical set} in order to describe a combinatorial model $|\widehat{\mathbf{\Lambda}} X|$ for $\Lambda Y$. A closed necklical set is a presheaf over the category of \textit{closed necklaces}. Closed necklaces are simplicial sets of the form $\Delta^{n_0} \vee ... \vee \Delta^{n_p}$ where each $\Delta^{n_i}$ is a standard simplex with $n_0\geq0$, $n_i \geq 1$ for $i=1,...,p$, and the wedges mean that the last vertex of $\Delta^{n_i}$ is identified with the first vertex of $\Delta^{n_{i+1}}$ for $i=0,...,p-1$ and the last vertex of $\Delta^{n_p}$ is identified with the first vertex of $\Delta^{n_0}$.

We build up on constructions and results of \cite{RS} where a combinatorial model for the based loop space on the geometric realization of a simplicial set $X$ is introduced using the notion of a \textit{necklical set}. Necklical sets are presheaves over the category of \textit{necklaces}. We will recall the necessary definitions and statements throughout the text to keep our presentation self contained. Necklaces are simplicial sets defined similarly to closed necklaces except that we do not identify the first and last vertices and all standard simplices in the wedge are required to have dimension at least $1$.

\newpage

 Necklaces were introduced in \cite{DS11} to describe the mapping spaces of certain functor from simplicial sets to simplicial categories, called the \textit{rigidification functor}, which compares two models for $\infty$-categories. Then necklaces inside a path connected simplicial set $X$ based at a vertex $b \in X_0$ were used in \cite{RS} to label the cubical cells of a combinatorial model for the based loop space and in  \cite{Rivera- Zeinalian} this result is packaged in the language of $\infty$-categories. More precisely, in \cite{RS} we describe a topological space $|\widehat{\mathbf{\Omega}} X|$ which is homotopy equivalent to the based loop space on $Y=|X|$ and is obtained as the geometric realization of a necklical set  $\widehat{\mathbf{\Omega}}X$ defined by glueing cubical cells corresponding to necklaces inside $X$ based at $b$. Because of the non-triviality of the fundamental group $\pi_1(X)$ the construction of $\widehat{\mathbf{\Omega}}X$ involves formally inverting $1$-simplices in $X$ in order to obtain the correct homotopy type of the based loop space, which is the reason for the ``hat" symbol in the notation $\widehat{\mathbf{\Omega}}X.$ In \cite{Rivera- Zeinalian} the different but closely related notion of a \textit{cubical set with connections} was used to obtain a similar construction.

In section 2 we recall the necessary definitions and results from \cite{RS} and \cite{Rivera- Zeinalian}, we define closed necklaces, closed necklical sets, and the geometric realization of a closed necklical set as a topological space obtained by glueing cells each of which is a cartesian product of a standard cube and certain polytope $F_n$ called a \textit{freehedra}, originally introduced in \cite{saneFREE}. We proceed with our main construction and result in section 3: associated to any path connected simplicial set $X$ there is a natural closed necklical set $\widehat{\mathbf{\Lambda}} X$ whose geometric realization  $|\widehat{\mathbf{\Lambda}} X|$ is a space homotopy equivalent to the free loop space $\Lambda Y$ on $Y=|X|$.  In fact, we construct a quasi-fibration
   $|\widehat{\mathbf{\Omega}}X| \xrightarrow{\iota} |\widehat{\mathbf{\Lambda}} X| \xrightarrow{\zeta} Y$
modeling the free loop  fibration  $\Omega Y\rightarrow \Lambda Y\rightarrow Y.$
  The proof of our main result only uses basic tools from classical algebraic topology similar to those used in \cite{RS}.

  Finally,  we explore some algebraic consequences in section 4. In \cite{RS} we explained how the algebra of chains on $\widehat{\mathbf{\Omega}} X$, the necklical set model for the based loop space, generalizes Adams' \textit{cobar construction} on the Alexander-Whitney differential graded (dg) coalgebra of chains on $X$. The chains on the closed necklical set  $\widehat{\mathbf{\Lambda}} X$  provides a small chain complex, suitable for calculations, which computes the homology of the free loop space of a path connected, possibly non-simply connected, space. We explain how such a chain complex generalizes the \textit{coHochschild chain complex}
  (\cite{Cartier},\cite{Hess},\cite{J-M.hoh},\cite{Loday1}) of the dg coalgebra of chains on $X$. We finish the paper by explaining how Goodwillie's result relating the free loop space to the Hochschild chain complex of the dg algebra of chains on the based loop space \cite{Goodwillie} follows as a corollary of our main result.

 The constructions described in this article yield small combinatorial and algebraic models for the free loop space of a connected non-simply connected space which we expect to have direct applications in calculating essential invariants in algebraic topology such as: cup and Massey products, higher cohomology operations, and string topology operations.

 \section{Closed necklaces and closed necklical sets}

We start by recalling the definitions of necklaces and necklical sets as introduced in \cite{DS11} and \cite{RS}. Denote by $Set_{\Delta}$ the category of simplicial sets and by $\Delta^{m}\in Set_{\Delta}$ the standard $m$-simplex. A \textit{necklace} is a wedge of standard simplices $$T=\Delta^{n_1} \vee ... \vee \Delta^{n_k} \in Set_{\Delta}$$ where the last vertex of $\Delta^{n_i}$ is identified with the first vertex of $\Delta^{n_{i+1}}$ and $n_i \geq 1$ for $i=1,...,k-1$. Each $\Delta^{n_i}$ is a subsimplicial set of $T$, which we call a \textit{bead} of $T$. Denote by $b(T)$ the number of beads in $T$. The set $T_0$, or the \textit{vertices} of $T$, inherits an ordering from the ordering of the beads in $T$ and the ordering of the vertices of each $\Delta^{n_i}$. A morphism of necklaces $f: T \to T'$  is a morphism of simplicial sets which preserves first and last vertices. If $T= \Delta^{n_1} \vee ... \vee \Delta^{n_k}$ is a necklace then the dimension of $T$ is defined to be $\text{dim}(T)= n_1 + \cdots + n_k -k$. Denote by $Nec$ the category of necklaces. A \textit{necklical set} is a functor $ Nec^{op} \rightarrow Set$ and a morphism of necklical sets is given by a natural transformation of functors. Denote the category of necklical sets by $Set_{Nec}$.

For our constructions, it will be useful to use the set of generators of morphisms in $Nec$ listed in the proposition below. For a proof see \cite{Rivera- Zeinalian} (Proposition 3.1).

\begin{proposition} Any non-identity morphism in $Nec$ is a composition of morphisms of the following type
\begin{itemize}
\item [(i)]  $f: T \to T'$ is an injective morphism of necklaces and $ \dim(T') - \dim(T) =1;$

\item [(ii)] $f: \Delta^{n_1} \vee ... \vee \Delta^{n_k} \to \Delta^{m_1} \vee ... \vee \Delta^{m_k}$ is a morphism of necklaces of the form $f=f_1 \vee ... \vee f_k$ such that for exactly one $p$
    with $n_p\geq 2,$
     $f_p=s^j: \Delta^{n_p} \to \Delta^{m_p}$ is a co-degeneracy morphism $s^j$ for some $j$ (so $m_p=n_p-1$) and for all $i \neq p$, $f_i: \Delta^{n_i}  \to \Delta^{m_i}$ is the identity map of standard simplices (so $n_i=m_i$ for $i \neq p$);

\item [(iii)] $f: \Delta^{n_1} \vee ...\vee \Delta^{n_{p-1}} \vee \Delta^{n_p} \vee \Delta^{n_{p+1}} \vee... \vee  \Delta^{n_k} \to \Delta^{n_1} \vee ...\vee \Delta^{n_{p-1}} \vee \Delta^{n_{p+1}} \vee... \vee  \Delta^{n_k}$ is a morphism of necklaces such that $f$ collapses the $p$-th bead $\Delta^{n_p}$  in the domain to the last vertex of the $(p-1)$-th bead in the target and the restriction of $f$ to all the other beads is injective.
\end{itemize}
\end{proposition}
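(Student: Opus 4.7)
My plan is to decompose $f: T \to T'$ using the standard image factorization and then decompose each factor into elementary morphisms of types (i)--(iii). First, I would factor $f = h \circ g$ where $T''$ is the necklace obtained by wedging together, in their natural order, the images of those beads of $T$ that do not collapse; $g: T \to T''$ is then surjective on non-degenerate simplices and $h: T'' \hookrightarrow T'$ is an injective morphism of necklaces preserving first and last vertices. Since each bead of $T$ is a standard simplex and every non-degenerate simplex of positive dimension in $T'$ lies within a single bead (as beads meet only at vertices), the restriction of $g$ to each bead of $T$ is either a collapse to a single vertex or a surjective simplicial map onto a single bead of $T''$.

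To decompose the surjective factor $g$, I would perform the elementary operations one at a time: each bead collapse contributes a type (iii) morphism, and each elementary co-degeneracy $s^j$ applied to a single bead contributes a type (ii) morphism. The bead-by-bead structure, combined with the classical decomposition of surjective simplicial maps between standard simplices into elementary co-degeneracies, produces $g$ as a composition of morphisms of types (ii) and (iii) performed in any convenient order. For the injective factor $h$, I would induct on $d = \dim(T') - \dim(T'')$: the base case $d=1$ is precisely a type (i) morphism, and for $d>1$ I would find an intermediate necklace $\widetilde{T}$ with $T'' \subsetneq \widetilde{T} \subseteq T'$ and $\dim(\widetilde{T}) = \dim(T'')+1$, factor $h$ through $\widetilde{T}$, and invoke the inductive hypothesis. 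The candidate $\widetilde{T}$ is built by adjoining to $T''$ a single minimal non-degenerate simplex of $T'$ that is not already in the image.

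The main obstacle is carrying out the injective step: one must ensure that $\widetilde{T}$ is itself a valid necklace (beads of positive dimension, correct wedge structure, preserving first and last vertices) and that adjoining the chosen simplex raises the total dimension by exactly one. The delicate cases arise either when the missing simplex is a $1$-simplex that refines an existing bead of $T''$ into two smaller consecutive beads inside $\widetilde{T}$, or when the missing simplex fills in a higher-dimensional face of an existing bead of $T'$ while $T''$ only contributes part of its boundary. A careful case analysis based on the combinatorics of shared vertices handles both situations, with the induction on $d$ keeping the argument tractable.
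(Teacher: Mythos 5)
The paper gives no proof of this proposition; it refers the reader to Proposition 3.1 of Rivera--Zeinalian, so there is no in-text argument to compare against. Your strategy (factor $f$ as a surjective-type morphism $g:T\to T''$ onto the wedge of bead-images followed by an injective $h:T''\hookrightarrow T'$, then decompose each factor) is sound, and the decomposition of $g$ into type (ii) and (iii) moves is straightforward once one notes that at least one bead of $T$ cannot collapse, so $T''$ really is a necklace.

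The genuine gap is in your inductive step for $h$. The recipe ``adjoin to $T''$ a single minimal non-degenerate simplex of $T'$ not already in the image'' does not produce a necklace in general: for instance, with $T''=(0,1)\vee(1,2)\vee(2,3)$ inside $T'=\Delta^3$ (dimensions $0$ and $2$), the minimal missing simplex is the edge $(0,2)$, and the simplicial subset generated by $T''$ and $(0,2)$ is not a wedge of simplices. What one should do instead is read off from the wedge points of $T''$ which of two moves applies. Write $T'=B_1'\vee\dots\vee B_l'$; each bead $C_i$ of $T''$ is a face of some bead of $T'$ with its first and last vertices the wedge points $w_{i-1},w_i$ of $T''$. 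If some $w_i$ is an interior vertex of a bead $B$ of $T'$, then $C_i$ and $C_{i+1}$ both lie in $B$; replace them by the single face of $B$ spanned by $V(C_i)\cup V(C_{i+1})$, giving $\widetilde T$ of dimension $\dim(T'')+1$. Otherwise every wedge point of $T''$ is a wedge point of $T'$, which forces $T''$ and $T'$ to have the same number of beads with each $C_i$ a face of $B_i'$ sharing its endpoints; since $T''\subsetneq T'$ some $C_i\subsetneq B_i'$, and enlarging that $C_i$ by one vertex of $B_i'$ again gives a necklace $\widetilde T$ with $\dim(\widetilde T)=\dim(T'')+1$. One of the two cases always applies when $T''\subsetneq T'$, and iterating produces the chain of type (i) morphisms your induction needs. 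This is the piece the proposal asserts can be handled by ``careful case analysis'' but does not actually carry out, and the specific construction you do give would fail.
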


We now define the category $Nec_c$ of \textit{closed necklaces}. The objects of $Nec_c$ are simplicial sets of the form $R=\Delta^{n_0}\vee T$, where $n_0 \geq 0$, $T= \Delta^{n_1} \vee ... \vee \Delta^{n_k}$ is a necklace in $Nec$, and the first vertex of $\Delta^{n_0}$ is identified with the last vertex of $T$.  We will call $\Delta^{n_0}$ and $\Delta^{n_k}$ the \textit{first}  and \textit{last beads} of $R$, respectively. The vertices of $R$ also inherit an natural ordering from the ordering of the set of beads of $R$ and ordering of the vertices on each bead.

Morphisms between closed necklaces are defined to be maps of simplicial sets which preserve first beads (but not necessarily preserve first vertices).  If $R=\Delta^{n_0} \vee T= \Delta^{n_0} \vee \Delta^{n_1} \vee ... \vee \Delta^{n_k}$ is a closed necklace then the dimension of $R$ is defined to be $\dim(R)=n_0+\dim(T)= n_0 + n_1 + \cdots + n_k -k$.

A \textit{closed necklical set} is a functor $K: Nec_c^{op} \rightarrow Set$ and a morphism of closed necklical sets is given by a natural transformation of functors. Denote the category of closed necklical sets by $Set_{Nec_c}.$
A simplicial set $X$   gives rise to an  example of a closed necklical set $K_X: Nec_c^{op} \to Set$
 via  the assignment $K_X(R)= Hom(R,X)$, the set of all simplicial set maps from $R$ to $X$. Now we describe a useful set of generators for the morphisms in $Nec_c$ similar to those described for $Nec$ in Proposition 1.

\begin{proposition}

Any non-identity morphism in $Nec_c$  is a composition of morphisms of the following type:

\begin{itemize}
\item[(i)] injective morphisms $f: R \to R'$ of closed necklaces such that $\dim(R') - \dim(R)=1$ and $f$ preserves the last beads;
\item[(i')] injective morphisms $f:R \to R'$ of closed necklaces such that $\dim(R') - \dim(R)=1$ and $f$ maps the last bead of $R$ into the first bead of $R';$
\item[(ii)] morphisms $f:  \Delta^{n_0} \vee \Delta^{n_1} \vee ... \vee \Delta^{n_k} \to \Delta^{n_0}  \vee \Delta^{m_1} \vee ... \vee \Delta^{m_k}$ where $f= id_{_{\Delta^{n_0}}} \vee f_1 \vee ... \vee f_k$ such that for exactly one $p$, with $p \geq 1$ and
    $n_p\geq 2,$ $f_p=s^j: \Delta^{n_p} \to \Delta^{m_p}$ is a simplicial co-degeneracy morphism for some $j$ (so $m_p=n_p-1$) and for all $i \neq p$, $f_i: \Delta^{n_i}  \to \Delta^{m_i}$ is the identity map of standard simplices (so $n_i=m_i$ for $i \neq p$);
\item[(ii')] morphisms $s^j \vee id_T: \Delta^{n_0+1} \vee  T \to \Delta^{n_0} \vee T$ where $s^j: \Delta^{n_0+1} \to \Delta^{n_0 }$ is a simplicial co-degeneracy morphism for some $1 \leq j \leq n_0+1;$
\item[(iii)] morphisms
 \begin{multline*}
 \hspace{0.5in}
 f: \Delta^{n_0} \vee \Delta^{n_1} \vee ... \vee \Delta^{n_p}  \vee... \vee  \Delta^{n_k} \rightarrow \\
  \Delta^{n_0} \vee \Delta^{n_1}...\vee \Delta^{n_{p-1}} \vee \Delta^{n_{p+1}} \vee... \vee  \Delta^{n_k}
 \end{multline*}
     such that, for some $1 \leq p \leq k$ with $k\geq 2,$  $f$ collapses the $p$-th bead $\Delta^{n_p}$ in the domain to the last vertex of the $(p-1)$-th bead $\Delta^{n_{p-1}}$
      in the target and the restriction of $f$ to all the other beads is injective.
\end{itemize}
\end{proposition}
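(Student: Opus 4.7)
The plan is to imitate the strategy used to prove Proposition 1 in \cite{Rivera- Zeinalian}, adapted to the closed setting, and to work from a standard epi--mono factorization of simplicial set maps. Given any non-identity morphism $f: R \to R'$ in $Nec_c$, we factor $f = \iota \circ \pi$ in $Set_\Delta$ as a surjection $\pi: R \twoheadrightarrow R''$ followed by an injection $\iota: R'' \hookrightarrow R'$. The first step is to verify that the intermediate simplicial set $R''$ inherits a closed necklace structure: its bead decomposition is obtained from that of $R$ by declaring each bead of $R''$ to be the image of the corresponding bead of $R$ under $\pi$ (possibly collapsed to a vertex). One also checks that $\pi$ preserves first beads by construction, and that $\iota$ then preserves first beads because $f$ does; hence both factors lie in $Nec_c$.

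Next, I would decompose $\pi$ into the listed surjective generators. Since each bead of $R''$ is the image of a bead of $R$, the map $\pi$ is determined by a choice of co-degeneracies on each individual bead together with bead collapses (where a whole bead of $R$ maps to a single vertex). Ordering these choices bead by bead, we factor $\pi$ as an iterated composition of maps, each of which either applies a single co-degeneracy $s^j$ to one bead with the rest identity (yielding a type (ii) morphism when the bead is not the first, and a type (ii') morphism when the bead is the first), or collapses a single non-first bead onto the last vertex of the preceding bead (yielding a type (iii) morphism). Each intermediate object is again a closed necklace, and the composition reconstructs $\pi$.

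For the injective factor $\iota$, I would induct on the dimension difference $d = \dim(R') - \dim(R'')$, factoring $\iota$ as a chain of dimension-one monomorphisms in $Nec_c$. At each stage, one selects a non-degenerate simplex of $R'$ in the complement of the image whose boundary is already hit, and adds it to form an intermediate closed necklace. If the added simplex extends a bead away from the wrap-around identification (or inserts a new bead), the result is a dimension-one injection preserving the last bead, namely a type (i) morphism. If instead the added simplex crosses the identification of the last vertex of the last bead with the first vertex of the first bead---equivalently, if the last bead of the current closed necklace lands in the interior of the first bead of the next stage---one obtains a type (i') morphism. A careful reading of the bead structure of $R'$ shows that such a dimension-one-at-a-time expansion is always available.

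The main obstacle I expect is the last step: verifying that every injection of closed necklaces actually admits a filtration by dimension-one extensions within $Nec_c$, rather than being forced to jump across several beads at once. The subtlety is that the closed necklace requirement (morphisms must preserve first beads and respect the wrap-around identification) restricts which intermediate simplicial sets are admissible. The way to handle this is a careful case analysis distinguishing (a) the image of the last bead of the source staying within a single bead of the target versus (b) it straddling the wrap-around into the first bead of the target; case (a) is resolved entirely by type (i) steps applied inside the necklace structure, while case (b) is the reason the extra generator (i') is needed in addition to the generators of Proposition 1.
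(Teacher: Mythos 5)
Your proposal is a genuinely different route from the paper's. The paper does not perform an epi--mono factorization at all; instead it defines a faithful functor $\mathcal{F}: Nec^{\geq 2} \to Nec_c$ that closes up a necklace with at least two beads by identifying its first and last vertices, observes that morphisms in the image of $\mathcal{F}$ are exactly the ones covered by the already-established decomposition of necklace morphisms (Proposition 1 of the cited reference), and then argues that the morphisms missed by $\mathcal{F}$ are precisely those of type $(i')$ while every non-injective one-step morphism automatically lies in the image of $\mathcal{F}$. That argument buys the result essentially for free by transferring Proposition 1, rather than redoing the combinatorial decomposition by hand. Your approach, by contrast, attempts to rebuild the decomposition from scratch in the closed setting, which is more self-contained but also means you are carrying the full burden of the necklace-case argument again instead of quoting it.

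There is a real gap in your injective step, and you have actually named it yourself without resolving it. You write that ``a careful reading of the bead structure of $R'$ shows that such a dimension-one-at-a-time expansion is always available,'' but that is precisely the claim that needs proof; it is the combinatorial heart of the statement, not a routine check. Concretely: given an injection $\iota: R'' \hookrightarrow R'$ of closed necklaces with $\dim R' - \dim R'' = d \geq 2$, you must exhibit an intermediate closed necklace $R'''$ with $R'' \subset R''' \subset R'$, $\dim R''' = \dim R'' + 1$, and both inclusions lying in $Nec_c$. The wrap-around identification is what makes this non-obvious, because an intermediate subsimplicial set of $R'$ containing $R''$ need not be a wedge of simplices in the required cyclic form, and the ``first bead'' of the candidate $R'''$ must be compatible with the first beads of both $R''$ and $R'$. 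Your case split (a)/(b) gestures at the right dichotomy but does not produce the intermediate object. This is exactly the work the paper sidesteps by invoking $\mathcal{F}$ and Proposition 1. Additionally, your surjective step asserts that the image $R''$ inherits a closed necklace structure by taking images of beads; this is plausible but also requires a short argument, since one must check that the images of consecutive beads meet only in the expected shared vertex and that the first-bead condition is preserved — points you state but do not verify.

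A secondary remark: even for the surjective part, the ordering in which you apply the type $(ii)$, $(ii')$, and $(iii)$ generators matters. A bead collapse (type $(iii)$) changes the indexing of the remaining beads, and a co-degeneracy on the first bead (type $(ii')$) may reduce $n_0$ to $0$. You should specify an order (e.g.\ process co-degeneracies bead by bead and then perform collapses from last to first) and check that each intermediate object is a legitimate closed necklace. None of this is fatal, but as written the proof is an outline of a strategy rather than a proof.
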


\begin{proof}
Let $Nec^{\geq 2}$ be the full subcategory of $Nec$ consisting of necklaces with at least two beads. Consider the functor $\mathcal{F}: Nec^{\geq 2} \to Nec_c$ defined on objects by sending a necklace $T$ to the closed necklace $\mathcal{F}(T)$ obtained by identifying the first and last vertices of $T$. Any morphism of necklaces $f: T\to T'$ induces a well defined morphism of closed necklaces $\mathcal{F}(f): \mathcal{F}(T) \to \mathcal{F}(T')$ since, by definition, $f$ preserves first and last vertices. The functor $\mathcal{F}$ is faithful (but not full) and injective on objects (but not surjective). Therefore, those morphisms in $Nec_c$ which are in the image of $\mathcal{F}$ can be described exactly as in Proposition 1 (Proposition 3.1 of \cite{Rivera- Zeinalian}), which tells us that each of these morphisms is a composition of morphisms of type $(i)$, $(ii)$, $(ii')$ or $(iii)$. We have separated $(ii)$ and $(ii')$ into two different types because in first bead in closed necklaces will play a particular role later on; in particular, note that in type $(ii')$ it is possible to have $n_0=0$ while in type $(ii)$, $m_p \geq 1$.

Closed necklaces of the form $\Delta^0 \vee T$ are not in the image of $\mathcal{F}$. However, morphisms of closed necklaces in which the domain or the target is of the form $\Delta^0 \vee T$ are generated by morphisms of type $(i), (ii), (ii')$ or $(iii)$. The argument in the proof of Proposition 1 (see \cite{Rivera- Zeinalian}) applies to this case as well. Finally, to obtain the rest of the morphisms of closed necklaces which are not in the image of $\mathcal{F}$ we can add morphisms of type $(i')$ to the generators. This follows since those injective morphisms $f: R \to R'$ of closed necklaces with $\dim(R') - \dim(R)=1$ which are not in the image of $\mathcal{F}$ are exactly those of type $(i')$ and any non-injective morphism $f: R\to R'$ with $\dim(R)- \dim(R')=1$ must be in the image of $\mathcal{F}$ (in fact it must be of type $(ii), (ii')$ or $(iii)$).
\end{proof}

\begin{remark}\normalfont
 A morphism $f: R= \Delta^{n_0} \vee \Delta^{n_1} \vee ... \vee \Delta^{n_k} \to R'=\Delta^{m_0} \vee \Delta^{m_1} \vee... \vee \Delta^{m_l}$ of type $(i)$ can be further classified into two sub-types:

 \noindent $(ia)$ the number of vertices of $R$ is one less than the number of vertices of $R'$ (in particular, this implies $k=l$), and

 \noindent $(ib)$ the number of vertices of $R$ and $R'$ are equal (which, in particular, implies $l=k+1$).

 \noindent Morphisms of type $(ia)$ are of the form $f= id \vee d^{j} \vee id$ where $d^j: \Delta^{n_i} \to \Delta^{m_i}$ for $j \in \{1,...,n_{i}-1\}$ is the simplicial $j$-th co-face morphism for some for some $i\in \{0,...,k\}$ with $n_i+1=m_i$. Morphisms $f$ of type $(ib)$ are those for which there are $i \in \{1,...,k \}$ and $j \in \{1,...,m_i-1\}$ such that $f=id \vee W^j \vee id$, where $W^j: \Delta^{n_i} \vee \Delta^{n_{i+1}} \to  \Delta^{m_i}$ for $n_i + n_{i+1}=m_i$, $i \in \{0,...,k-1\}$, is the injective map of simplicial sets whose image in $\Delta^{m_i}$ is the wedge of the two sub-simplicial sets corresponding to the $j$-th term in the Alexander-Whitney diagonal map applied to the unique non-degenerate top dimensional simplex in $\Delta^{m_i}$.

 \noindent Morphisms of type $(i')$ are necessarily of the following form: injective maps $f: R= \Delta^{n_0} \vee \Delta^{n_1} \vee ... \vee \Delta^{n_k} \to R'=\Delta^{m_0} \vee \Delta^{m_1} \vee... \vee \Delta^{m_{k-1}}$ such that $\dim(R') - \dim(R)=1$, $R$ and $R'$ have the same number of vertices, $n_k +n_0= m_0$, and the restriction of $f$ to the last and first beads of $R$ yields a map $W^j: \Delta^{n_k} \vee \Delta^{n_0} \to \Delta^{m_0}$ corresponding to the $j$-th term in the Alexander-Whitney diagonal map applied to the unique non-degenerate top dimensional simplex of $\Delta^{m_0}.$
\noindent
For each closed necklace $R$ of dimension $n$ there are exactly $n-1$ morphisms $\delta^{i}_1: R^1_i \to R$ $(i=2,...,n)$ of type $(ia)$ and $n$ morphisms $\delta_0^{i}: R^0_i \to R$ $(i=1,...,n)$ of type $(ib)$. Moreover, given a closed necklace $R= \Delta^{n_0} \vee T,$  there are exactly $n_0$ morphisms of closed necklaces
    \begin{equation}\label{ad2}
    \delta^{i}_2: \Delta^{k}\vee T \vee \Delta^{i} \to \Delta^{n_0} \vee T    , \ \  i=1,...,n_0,
    \end{equation}
    of type $(i')$  such that $k+ i = n_0$ for $k \geq 0.$  By convention we define $\delta^1_1:= \delta^1_2$.
\end{remark}

 \section{Closed necklical model for the free loop space}

For any simplicial set $X$ consider the graded set \[ \Big(\bigsqcup_{R \in Nec_c} Hom(R,X) \Big)/\sim\]  where $\sim$ is the equivalence relation generated by the following rules:

\begin{eqnarray}\label{rule1}
f \circ s^{n_p+1} \sim f\circ  s^{0},  \, 1\leq p \leq k,
\end{eqnarray}
for any $f: \Delta^{n_0} \vee \Delta^{n_1} \vee ... \vee \Delta^{n_k} \rightarrow  X$ where
\begin{multline*}
s^{n_p+1}: \Delta^{n_0} \vee \Delta^{n_1} \vee ...\vee \Delta^{n_{p-1}} \vee \Delta^{n_p+1} \vee  \Delta^{n_{p+1}}\vee... \vee  \Delta^{n_k} \to \\
\Delta^{n_0} \vee \Delta^{n_1} \vee ... \vee \Delta^{n_{p-1}} \vee \Delta^{n_p} \vee \Delta^{n_{p+1}} \vee ...\vee \Delta^{n_k}
\end{multline*} is given by applying the last co-degeneracy map to the $p$-th bead, and
\begin{multline*}
s_0: \Delta^{n_0} \vee \Delta^{n_1} \vee ...\vee \Delta^{n_{p-1}} \vee \Delta^{n_p} \vee \Delta^{n_{p+1}+1}\vee \Delta^{n_{p+2}} \vee... \vee  \Delta^{n_k} \to \\
 \Delta^{n_0} \vee \Delta^{n_1} \vee ... \vee \Delta^{n_p} \vee \Delta^{n_{p+1}}\vee \Delta^{n_{p+2}} \vee ...\vee \Delta^{n_k}
\end{multline*}
by applying the first co-degeneracy map to the $(p+1)$-th bead (if $p=k$ then $p+1$ should be interpreted as $0$); and
\begin{equation}\label{rule2}
f \circ u \sim f,
\end{equation}
for any $f \in  Hom(R,X)$ and any morphism $u$ in $Nec$ of type $(iii)$. Denote  the $\sim$-equivalence class of $(f: R\to X)$ by
 $[f: R \to X]$.

\subsection{The closed necklical set $\mathbf{\Lambda} X$}
For any simplicial set $X$ define a closed necklical set $\mathbf{\Lambda}(X): Nec_c^{op} \to Set$ by declaring $\mathbf{\Lambda}(X)(R)$ to be the subset of $\Big(\bigsqcup_{R' \in Nec_c} Hom(R',X) \Big)/\sim$  consisting of all $\sim$-equivalence classes represented by morphisms $R \to X \in Nec_c \downarrow X$. This clearly defines a functor: given a morphism $u: R \to R'$ in $Nec_c$ and an element $[f: R' \to X] \in \mathbf{\Lambda} (X) (R')$ we obtain a well defined element  $ [f \circ u: R \to R' \to X ] \in \mathbf{\Lambda}(X)(R)$.  In particular, $\mathbf{\Lambda} X=\{\mathbf{\Lambda}_{n_0,r,k} X\}_{n_0,r\geq 0,k\geq 1}$ is a trigraded set with
$\mathbf{\Lambda}_{n_0,r,k} X:= \{\Delta^{n_0}\vee T \to X \in (Nec_c \downarrow X)\mid \dim T=r,\, b(T)=k  \} / \sim .$
Note that $\mathbf{\Lambda} (X)$ is precisely the following colimit in the category of necklical sets
\begin{eqnarray*}
\mathbf{\Lambda} (X)=  \underset{f: R \to X \in (Nec \downarrow X) } {\text{colim}} Y(R),
\end{eqnarray*}
where $Y\! : Nec_c \to Set_{Nec_c}$ denotes the Yoneda embedding $Y(R)=\! Hom_{Nec_c}( -\,, R)$. The closed necklical set $\mathbf{\Lambda} (X)$ may be thought of as a free base point analogue of the necklical set $\mathbf{\Omega}(X;x)$, which was defined in \cite{RS} as
\begin{eqnarray*}
\mathbf{\Omega}(X;x)= \underset{ f: T \to X \in (Nec \downarrow X)_x} {\text{colim}} Y(T)
\end{eqnarray*}
where $x \in X_0$ is a fixed point, $(Nec \downarrow X)_x$ denotes the category of maps $f:T \to X$ from some $T \in Nec$ such that $f$ sends the first and last vertices of $T$ to $x$, and $Y(T)= Hom_{Nec}(-\, , T)$.

  Define the \textit{closed necklical face maps}
\[
\begin{array}{ll}
d^1_i : \mathbf {\Lambda} _{n_0,r,k}X \rightarrow \mathbf {\Lambda}_{n_0-1,r,k}X & \text{for}\ \   1\leq i\leq n_0,\vspace{1mm}\\
d^1_i : \mathbf {\Lambda} _{n_0,r,k}X \rightarrow \mathbf {\Lambda}_{n_0,r-1,k}X  & \text{for}\ \   n_0< i\leq n_0+r,\vspace{1mm}\\
d^0_i : \mathbf {\Lambda} _{n_0,r,k}X \rightarrow \mathbf {\Lambda}_{i-1,n_0+r-i,k+1}X & \text{for}\ \  1\leq i\leq n_0
,\vspace{1mm}\\
d^0_i : \mathbf {\Lambda} _{n_0,r,k}X \rightarrow \mathbf {\Lambda}_{n_0,r-1,k+1}X&  \text{for}\ \  n_0< i\leq n_0+r,\vspace{1mm}\\
d^2_i : \mathbf {\Lambda} _{n_0,r,k}X \rightarrow \mathbf {\Lambda}_{n_0-i,r+i-1,k+1}X & \text{for}\ \   1\leq i\leq n_0,
\end{array}
  \]
and \textit{closed necklical degeneracy maps}
\[ \begin{array}{ll} \eta_j : \mathbf {\Lambda} _{n_0,r,k}X \rightarrow \mathbf {\Lambda}_{n_0+1,r,k}X & \text{for}\ \  1\leq j\leq n_0, \vspace{1mm}\\
\eta_j : \mathbf {\Lambda} _{n_0,r,k}X \rightarrow \mathbf {\Lambda}_{n_0,r+1,k}X & \text{for}\ \  n_0<j\leq n_0+r+k+1
\end{array}
 \]
 by
 $d^{\epsilon}_i[f: R \to X] = [f \circ \delta_{\epsilon}^i: R^0_{\epsilon} \to X ]$, where $\epsilon=0,1,2$ and we have written $R^i_{\epsilon}$ for the domain of the injective morphism of closed necklaces $\delta_{\epsilon}^i:R^i_{\epsilon} \to R$ as described in Remark 1, and
$ \eta_j[f: R \to X]=[ f \circ s^j: R^j \to X]$ where $s^j: R^j \to R$ is the $j$-th map of type $(ii')$ or the $(j-n_0)$-th map of type $(ii)$. These maps satisfy certain relations as described in  \cite{saneFREE}.
To define the geometric realization of $\mathbf{\Lambda}X$ we must fix the modeling polytopes as in the following subsection.

\subsection{The  polytopes $F_n$}

We describe the polytopes $F_n$ in two different, but combinatorially equivalent, ways. We begin with a geometric description. Denote by $|\Delta^n|$ and $|I^n|$ the topological standard $n$-simplex and $n$-cube, respectively (we reserve the notation $\Delta^n$ and $I^n$ for abstract sets as in \cite{RS}), and let $v_0,...,v_n$ be the vertices of $|\Delta^n|$. First, consider the following procedure to obtain a polytope combinatorially equivalent to $|I^n|$ from a sequence of truncations applied to $|\Delta^n|$ for any $n\geq 1$. We may think of $|I^1|$ as obtained from $|\Delta^1|$ by cutting via a line that intersects $|\Delta^1|$ in the ``middle". We obtain the combinatorial polytope $|I^2|$ by intersecting the $2$-simplex $|\Delta^2|$ with a line which has $v_0$ on one side and $v_1$ and $v_2$ on the other side. We truncate $|\Delta^2|$ at this line so that $v_0$ has been cut out and replaced by a $1$-simplex, which we label by $(v^0_0, v^1_0)$. The resulting polytope is combinatorially equivalent to $|I^2|$ with its four $1$-faces given by $(v^0_0, v^1_0), (v^0_0, v_1), (v_1, v_2),$ and $(v_1^0, v_2)$. For any integer $n$ this process may be described inductively: intersect $\sigma:=|\Delta^n|$ with a hyperplane $P_0$ which has $v_0$ on one side and all the other vertices on the other side, then remove everything on the side of $v_0$ to obtain a triangular prism $\sigma'$ in which the vertex $v_0$ has been replaced by an $(n-1)$-simplex $\sigma_0:=(v^0_0,v^1_0...,v^{n-1}_0)$ and the second triangular face of $\sigma$ is the $(n-1)$-simplex $\sigma_1=(v_1, v_2,...,v_n)$. By induction we know how to obtain $(n-1)$-cubes from the $(n-1)$-simplices $\sigma_0$ and $\sigma_1.$ Taking this into account we now truncate $\sigma'$ by a hyperplane $P_1$ such that in one side of $P_1$ we have the vertices $v^0_0$ and $v_1$ and all the other vertices on the second side. We continue this process $(n-1)$  times with hyperplanes $P_0, P_1, ...,P_{n-2}$ to obtain a polytope combinatorially equivalent to $|I^n|$. Note that the resulting cube has vertex set $V_0 \cup ... \cup V_{n-2} \cup \{v_{n-1}\} \cup \{ v_n \}$ where $V_i$ denotes the (naturally ordered) set of vertices obtained from consecutive truncations that started at $v_i$; denote by ${v}^0_i$ the first vertex of $V_i$.

Define $F_0= |I^0|$ and $F_1=|I^1|$. For any $n\geq 2$, $F_n$ is obtained from $|\Delta^n|$ by first applying a sequence of truncations using hyperplanes $P_0,...,P_{n-2}$ as described above to obtain a cube $|I^n|$ and then applying a sequence of truncations to the cube at vertices $v_n, v_{n-1}, {v}^0_{n-2},..., {v}^0_2$ using hyperplanes $Q_0,...,Q_{n-2}$, respectively, in the same way as described above (but note that we are now going backwards with respect to the ordering of the vertices).

We now give a combinatorial description of the polytopes $F_n$ as a subdivision of the standard cube $|I^n|$. The polytopes $F_n$ will have $3n-1$ faces of codimension $1$: $n$ corresponding to the symbols $d^0_1,...,d^0_n$, $n-1$ corresponding to the symbols $d^1_2,...,d^1_n$, and $n$ corresponding to the symbols $d^2_1=d^1_1, d^2_2,...,d^2_n$. We may think of the symbols $d^{\epsilon}_i$ for $\epsilon=0,1,2$, $i=1,...,n$ as labels; we call them \textit{face operators} since they will fit with the closed necklical face maps introduced in section 2; in fact, we have used the same notation.

Let $F_0$ be a point. Let $F_1$ be the interval $[0,1]$ with first vertex corresponding to $d^0_1$ and second vertex corresponding to $d^1_1=d^2_1$. Denote by $e^{\epsilon}_{i}$
the face $(x_1,...,x_i,\epsilon,x_{i+1},....,x_{n-1})$ of the cube $I^n$ where $\epsilon=0,1$
and $1\leq i\leq n.$  Suppose $F_{n-1}$ has been constructed as a subdivision of $|I^{n-1}|$. We define  inductively $F_n$ to be the subdivision of $F_{n-1}\times [0,1]$ with faces given by the following table:


\begin{equation*}
\begin{tabular}{c|cc}
$\underset{\ }{\text{\textbf{Face of }}F_{n}}$ & \textbf{Face operator} &
\vspace{-1mm}\\
 \hline

 $e^{0}_{i}$ & $d^{0}_{i} ,$ & \multicolumn{1}{l}{$1\leq i\leq n,$} \\

$e^{1}_{i}$ & $d^{1}_{i},$ & \multicolumn{1}{l}{$2\leq i\leq n,$}
 \\

$d^{2}_{i}\times [0,1]$ & $d^{2}_{i},$ & \multicolumn{1}{l}{$1\leq i\leq n-2,$} \\

$d^{2}_{n-1}\times\left[0,\frac{1}{2}\right]^{\strut } $ & $d^{2}_{n-1},$
& \multicolumn{1}{l}{} \\

$d^{2}_{n-1}\times\left[\frac{1}{2},1  \right]^{\strut } $ & $d^{2}_{n}.$ &
\multicolumn{1}{l}{}
\end{tabular}
\end{equation*}
\vspace{0.2in}

 Thus, $F_2$ is a pentagon, $F_3$ has eight 2-faces (4 pentagon and 4 quadrilateral), 18
edges and 12 vertices, as showed in Figure 1.

\vspace{0.2in}

\unitlength 1mm 
\linethickness{0.4pt}
\ifx\plotpoint\undefined\newsavebox{\plotpoint}\fi 
\begin{picture}(109.471,38.739)(0,0)
\put(16.998,4.334){\line(1,0){25.375}}
\put(15.428,35.657){\line(1,0){25.375}}
\put(42.373,4.334){\line(0,1){21}}
\put(42.373,25.334){\line(-1,0){25.375}}
\put(16.998,25.334){\line(0,-1){20.875}}
\put(42.373,25.209){\circle*{1}}
\put(17.123,25.084){\circle*{1}}
\put(16.998,4.334){\circle*{1}}
\put(15.428,35.657){\circle*{1}}
\put(42.373,4.334){\circle*{1}}
\put(40.803,35.657){\circle*{1}}
\put(42.373,14.334){\circle*{1}}
\put(64.846,35.722){\line(1,0){22.875}}
\put(64.846,35.472){\circle*{1}}
\put(85.346,8.597){\circle*{1}}
\put(108.971,8.472){\circle*{1}}
\put(98.971,12.472){\circle*{1}}
\put(87.971,16.722){\circle*{1}}
\put(87.971,26.597){\circle*{1}}
\put(99.096,21.972){\circle*{1}}
\put(98.721,31.597){\circle*{1}}
\put(108.846,27.722){\circle*{1}}
\put(85.596,27.597){\circle*{1}}
\put(87.846,35.597){\circle*{1}}
\multiput(87.971,35.472)(.092391304,-.033695652){230}{\line(1,0){.092391304}}
\multiput(64.846,35.347)(.089673913,-.033695652){230}{\line(1,0){.089673913}}
\put(85.221,27.597){\line(1,0){23.875}}
\put(85.471,27.347){\line(0,-1){18.875}}
\put(85.471,8.472){\line(1,0){23.5}}
\put(108.971,8.472){\line(0,1){19.125}}
\put(87.971,35.597){\line(0,-1){7.625}}
\put(98.971,31.347){\line(0,-1){3.5}}
\multiput(88.096,26.472)(.082089552,-.03358209){134}{\line(1,0){.082089552}}
\put(88.096,26.972){\line(0,-1){10.25}}
\multiput(88.096,16.722)(.086618257,-.033713693){241}{\line(1,0){.086618257}}
\put(85.846,16.597){\line(1,0){2.375}}
\put(84.971,16.722){\line(-1,0){19.75}}
\multiput(65.221,16.722)(.085084034,-.033613445){238}{\line(1,0){.085084034}}
\put(64.971,35.347){\line(0,-1){18.5}}
\put(93.771,30.347){\makebox(0,0)[cc]{$_{3][0123]}$}}
\put(93.771,20.222){\makebox(0,0)[cc]{$_{23][012]}$}}
\put(104.096,19.472){\makebox(0,0)[cc]{$_{123][01]}$}}
\put(99.096,26.972){\line(0,-1){14.5}}
\put(65.096,16.847){\circle*{1}}
\put(83.346,32.097){\makebox(0,0)[cc]{$_{013]}$}}
\put(77.846,25.847){\makebox(0,0)[cc]{$_{023]}$}}
\put(74.221,19.722){\makebox(0,0)[cc]{$_{0][0123]}$}}
\put(79.221,14.472){\makebox(0,0)[cc]{$_{012][23]}$}}
\put(92.221,11.847){\makebox(0,0)[cc]{$_{01][123]}$}}
\put(16.728,1.666){\makebox(0,0)[cc]{$_{0][01][12]}$}}
\put(42.353,2.291){\makebox(0,0)[cc]{$_{1][12][01]}$}}
\put(49.228,14.041){\makebox(0,0)[cc]{$_{2][01][12]}$}}

\put(47.5,24.512){\makebox(0,0)[cc]{$_{2][02]}$}}
\put(12.5,24.512){\makebox(0,0)[cc]{$_{0][02]}$}}
\put(21.603,14.334){\makebox(0,0)[cc]{$_{0][012]}$}}
\put(37.998,18.334){\makebox(0,0)[cc]{$_{2][012]}$}}
\put(29.623,23.209){\makebox(0,0)[cc]{$_{02]}$}}
\put(29.248,6.116){\makebox(0,0)[cc]{$_{01][12]}$}}
\put(38.103,9.209){\makebox(0,0)[cc]{$_{12][01]}$}}

\put(15.743,33.0){\makebox(0,0)[cc]{$_{0][01]}$}}
\put(41.022,33.0){\makebox(0,0)[cc]{$_{1][01]}$}}
\put(27.449,38.739){\makebox(0,0)[cc]{$01]$}}
\put(8.0,13.665){\makebox(0,0)[cc]{$012]$}}
\put(118.0,18.92){\makebox(0,0)[cc]{$0123]$}}
\end{picture}

\begin{center} Figure 1: $F_{n}$ as a subdivision of $F_{n-1}\times [0,1]$
for $n=1,2,3.$
\end{center}

\vspace{0.2in}

We will now relate the polytopes $F_n$ to closed necklical sets and describe a combinatorial labeling for the cells of $F_n$ (which have already appeared in Figure 1) similar to the labeling described in section 3.2 of \cite{RS} for the cells of a cube. Let $\mathbb{F}_n:= Y(\Delta^n \vee \Delta^1)$ where $Y: Nec_c \to Set_{Nec_c}$ is the Yoneda embedding.
The \textit{codimension j faces} of the closed necklical set $\mathbb{F}_n$ are defined to be those elements of $\mathbb{F}_n$ given by injective morphisms of closed necklaces $R \to \Delta^n \vee \Delta^1$ where $R$ is a closed necklace of dimension $n-j$. It follows that the codimension 1 faces of $\mathbb{F}_n$ are given by morphisms $R \to \Delta^n \vee \Delta^1$ of closed necklaces where $R$ is  $\Delta^{n-1} \vee \Delta^1$, $\Delta^p \vee \Delta^q \vee \Delta^1$, or $\Delta^{p} \vee \Delta^1 \vee \Delta^q$ for some integers $p\geq 0, q\geq 1$ satisfying $p+q=n.$  The only possible  injective maps of closed necklaces for such $R$ are given by the following ``co-face" morphisms as described in Remark 1

\[
\begin{array}{ll}
\delta^i_0: \Delta^{i-1}\vee \Delta^{n-i+1} \vee \Delta^1 \hookrightarrow \Delta^n \vee \Delta^1        & \text{for}\ \ 1\leq i\leq n,\\
\delta^i_2: \Delta^{n-i}\vee \Delta^1 \vee \Delta^i \hookrightarrow \Delta^n \vee \Delta^1        & \text{for}\ \ 1\leq i\leq n,\\
\delta^i_1=d^{i-1} \vee id_{_{\Delta^1}}: \Delta^{n-1} \vee \Delta^1 \hookrightarrow \Delta^n \vee \Delta^1          & \text{for}\ \ 2\leq i\leq n,
\end{array}
    \]
and, by convention, set $\delta^1_1:=\delta^1_2.$

 Labeling the top cell of $\mathbb{F}_n$ (i.e. the element $id: \Delta^n \vee \Delta^1 \to \Delta^n \vee \Delta^1$ of $\mathbb{F}_n$) by the symbol $0,1,...,n],$ the above morphisms $\delta^i_0, \delta^i_2$,  $\delta^i_1$ determine codimension 1 faces of $\mathbb{F}_n$ which may be labeled by the symbols
 $0,1,...,i][i,i+1,...,n]$, $i,i+1,...,n][0,...,i]$, and $0,...,i-1,\hat{i},i+1, ....,n]$, respectively, where $\hat{i}$ means we omit $i$ from such sequence. In general, any proper face $u$ of $0,...,n]$ may be labeled by a symbol

\begin{multline}
\!\!\!\!\!u\!= i_{s_t},\!...,i_{s_{t+1}}][i_{s_{t+1}},\!...,i_{s_{t+2}}]
...[i_{s_{k-1}},\!...,i_{s_{k}},\!n] [0,i_1,\!...,i_{s_1}][i_{s_1},\!...,i_{s_{2}}]
...[i_{s_{t-1}},\!...,i_{s_t}],
\\
0<i_{1}<\ldots <i_{s_{t}}<\ldots <i_{s_{k}}<n,
\end{multline}
 where $\dim u=s_k-k+1.$

The combinatorics of the faces of the closed necklical set $\mathbb{F}_n$ agree with those of the topological space $F_n$, meaning that the above symbols may be used to label the cells of the polytope $F_n$ as well (see Figure 1). Moreover, as explained in \cite{RS}, the top dimensional cell of the standard cube $|I^n|$ may be labeled by $[0,1...,n+1]$ and its $2n$ codimension $1$ faces by  $[0,1...,i][i,i+1,...,n+1]$ and $[0,...,i-1,\hat{i},i+1,...,n+1]$ for $i=1,...,n$ with face operators also denoted by $d^0_i$ and $d^1_i$, respectively.

 It follows that we may think of $a_0,a_1,\!...,a_p][b_0,\!...,b_{q+1}]$ as a label for the top cell of $F_p \times |I^q|$ together with face operators $d^0,d^1,d^2$ given by
\[
\begin{array}{llll}
 a_0,a_1,\!...,a_p][b_0,\!...,b_{q+1}]\overset{d^{0}_{i}}{\longrightarrow}\!
\left\{\!\!\!
 \begin{array}{lll}
a_0,a_1,\!...,a_{i-1}][a_{i-1},\!...,a_{p}][b_0,\!...,b_{q+1}], & 1\!\leq\! i\!\leq\! p,\\
a_0,a_1,\!...,a_{p}][b_0,...,b_{j}][b_j,...,b_{q+1}],& i\!=p\!+\!j\\
\end{array}
\right. \\
\\
a_0,a_1,\!...,a_m][b_0,\!...,b_{n+1}]\overset{d^{1}_{i}}{\longrightarrow}\!
 \left\{\!\!\! \!  \begin{array}{llll}
a_1,\!...,a_p][b_0,\!...,b_{q+1}][a_0,a_1], \  & && i=1,\\
a_0,a_1,\!...,\hat{a}_{i-1},\!...,a_p][b_0,\!...,b_{q+1}], \  & & &  2\!\leq \!i\!\leq\! p,\\
a_0,a_1,\!...,a_{p}][b_0,\!b_1,\!...,\hat{b}_{j},\!...,b_{q+1}], \  &&  &  i\!=p\!+\!j\\
\end{array}
\right.
 \\
a_0,a_1,\!...,a_p][b_0,\!...,b_{q+1}]\!\overset{d^{2}_{i}}{\longrightarrow}
a_i,\!...,a_{p}][b_0,\!...,b_{q+1}][a_0,a_1,\!...,a_i], \hspace{0.5in}  1\!\leq i\leq p.
\end{array}
\]
Thus, the proper faces  of $F_n$ obtained by means of  the operators $d^0$ and $d^2$ can be identified with the top cell of cartesian products $F_p\times I^q$
for $0\leq p,q\leq n$ with $p+q<n.$
These identifications induce continuous inclusion maps which we denote by the same symbols
\[
\begin{array}{ll}
\delta^i_0: F_{i-1}\times |I^{n-i}|\hookrightarrow F_n               & \text{for}\ \ 1\leq i\leq n,\\
\delta^i_2: F_{n-i}\times |I^{i-1}|\hookrightarrow F_n                & \text{for}\ \ 1\leq i\leq n,     \\
\delta^i_1: F_{n-1} \hookrightarrow F_n               & \text{for}\ \ 2\leq i\leq n.
\end{array}
    \]

\vspace{0.2in}

In a similar way, the morphisms of type $(ii)$ and $(ii')$ induce continuous collapse maps $\varsigma^j: F_{p+1} \times |I^q| \to F_p \times | I^q|$ for $j=1,...,p+1$ and $\varsigma^j: F_p \times |I^{q+1}| \to F_p \times |I^q|$ for $j=p+2,..., p+q+1$, respectively.  We denote by $\partial^i_{\epsilon}: |I^{n-1}| \to |I^n|$ the usual cubical face inclusion maps.

There is a canonical continuous cellular projection map
\begin{equation}\label{varphi}
\varphi: F_n\rightarrow |\Delta^n|
\end{equation}
which sends the top cell $0,...,n]$ of $F_n$ to the top cell $(0,...,n)$ of the simplex $|\Delta^n|$ in such a way that a face $u$  of $F_n$ as in $(3.3)$ goes to the face $(i_{s_t},...,i_{s_{t+1}})$ of $(0,...,n)=|\Delta^n|$. In particular, $\varphi$ collapses those  faces  of $F_n$ with labels  $$i_{s_t}][i_{s_{t}},\!...,i_{s_{t+1}}]
...[i_{s_{k-1}},\!...,i_{s_{k}},\!n] [0,i_1,\!...,i_{s_1}][i_{s_1},\!...,i_{s_{2}}]
...[i_{s_{t-1}},\!...,i_{s_t}]$$ to the vertex $i_{s_t}$ of $|\Delta^n|$.

\subsection{The geometric realization of $\mathbf{\Lambda} X$}

The \textit{geometric realization} of the closed necklical set $\mathbf{\Lambda} X $  is
defined analogously to that of $\mathbf{P}X$ and $\mathbf{\Omega}X$ as in \cite{RS}, but  instead of the standard cubes as modeling polytopes we use the cartesian products $F_{n_0}\times I^r.$
Namely,  $ |\mathbf{\Lambda} X|$
is
the topological space defined by
\begin{eqnarray*}
|\mathbf{\Lambda} X|:= \bigsqcup_{n,r\geq 0}\mathbf{\Lambda}_{n_0,r} X  \times (F_{n_0}\times  |I^{r}|)   / \sim,
\end{eqnarray*}
where  $\mathbf{\Lambda}_{n_0,r} X $ is considered as a topological space with the discrete topology, and $\sim$ is the equivalence relation  generated as follows:
for any $f: \Delta^{n_0} \vee T \to X$
$ (f, \delta^i_{\epsilon} (t), s )\sim (d^{\epsilon}_i(f), t,s) $ for $\epsilon=0,1,2,$
  $ (f, t' , \partial^i_{\epsilon} (s') )\sim (d^{\epsilon}_i(f), t',s') $ for $\epsilon=0,1$
   and $(f, \varsigma^j(t))\sim (\eta_j(f), t).$ Here we have written $(t,s)$ to denote points in $(F_{i-1}\times|I^{n_0-i}|) \times |I^r|$ when $\epsilon=0$, points in $(F_{n_0-i} \times |I^{i-1}|) \times |I^r|$ when $\epsilon=2$, points in $F_{n_0-1} \times |I^r|$ when $\epsilon=1$, and $(t',s')$ to denote points in $F_{n_0} \times |I^{r-1}|$. In particular, we have $|\mathbb{F}_n|= F_n$.  Compare $|\mathbf{\Lambda}(X)|$ to $|\mathbf{\Omega} (X;x)|$ which was defined in \cite{RS} as the space
\begin{eqnarray*}
|\mathbf{\Omega} (X;x)|:= \bigsqcup_{n\geq 0}\mathbf{\Omega}_n (X;x)  \times | I^n |  / \sim
\end{eqnarray*}
where $\sim$ is defined similarly to the relation above.

\subsection{Inverting $1$-simplices formally}

Given a simplicial set $X,$ form a set $X_1^{op}:=\{ x^{op} \mid x \in X_1\ \ \text{is non-degenerate} \}.$ Let $Z(X)$ be the minimal simplicial set containing the set
$X\cup X^{op}_1$ such that $\partial_0( x^{op} )= \partial_1( x)$ and  $\partial_1( x^{op} ) = \partial_0 (x).$
Define
\begin{eqnarray*}
\widehat{\mathbf{\Lambda}}: Set_{\Delta} \to Set_{Nec_c}
\end{eqnarray*}
as $\widehat{\mathbf{\Lambda}}(X):= \mathbf{\Lambda}(Z(X)) / \sim$
where the equivalence relation $\sim$ is generated by
\[ f\sim  f'\circ g:T\rightarrow Z (X) \]
for all $T=\Delta^{n_0} \vee \Delta^{n_{1}}\vee ...\vee\Delta^{n_p}\vee\Delta^{n_{p+1}}\vee ...\vee \Delta^{n_{k}} $ with $1 \leq p \leq k$, $n_p=n_{p+1}=1$, and
morphisms $f$ and $g$ such that  $f$ satisfies $f(\Delta^{n_p})=(f(\Delta^{n_{p+1}}))^{op},$ so $f$ induces a map
$f':  \Delta^{n_0} \vee \Delta^{n_{1}}\vee ...\vee \Delta^{n_{p-1}}\vee\Delta^{n_{p+2}}\vee ...\vee \Delta^{n_{k}}\rightarrow X,$
and
\begin{multline*}
g:  \Delta^{n_0} \vee \Delta^{n_{1}}\vee ...\vee \Delta^{n_{p-1}}\vee\Delta^{n_p}\vee\Delta^{n_{p+1}}\vee\Delta^{n_{p+2}}\vee ...\vee \Delta^{n_{k}}
\rightarrow
\\
 \Delta^{n_0} \vee \Delta^{n_{1}}\vee ...\vee \Delta^{n_{p-1}}\vee\Delta^{n_{p+2}}\vee ...\vee \Delta^{n_{k}}
\end{multline*}
is the collapse map. The geometric realization $|\widehat{\mathbf{\Lambda}}X|$ is defined similarly as in the previous section, but note that now there are new $0$-cells corresponding to the inverted $1$-simplices.

\subsection{An explicit construction of $\widehat{\mathbf{\Lambda}}X$}

Let $(X,x_0)$ be a pointed simplicial set with face and degeneracy maps denoted by $\partial_i$ and $s_j$, respectively. For a simplex $\sigma \in X$ denote by $\min \sigma$ and $\max \sigma$ the first and last vertices of $\sigma$, respectively.
 We recall the following explicit description of the underlying graded set $\{ \widehat{\mathbf{\Omega}}_n X \}_{n\geq 0}$ of the necklical set $\widehat{\mathbf{\Omega}} X$. For any $\sigma_i\in Z(X)_{>0}$, let $\dim (\bar \sigma)=\dim(\sigma)-1 $ and define
 \begin{multline}
\widehat{\mathbf{\Omega}}'_n X=\{ \bar \sigma_1\cdots \bar \sigma_k  \mid \max \sigma_i=\min \sigma_{i+1}\
\text{for all}\ i,\ \max\sigma_k= x_0 ,\\  \, | \bar \sigma_1|+\cdots +| \bar \sigma_k|=n,\, k\geq 1  \}
  \end{multline}
with relations
\[\bar \sigma_1\cdots  \bar \sigma_i\cdot \bar \sigma_{i+1}   \cdots \bar \sigma_k=
\bar \sigma_1\cdots \bar \sigma_{i-1}\cdot \bar \sigma_{i+2}\cdots    \bar \sigma_k
\ \ \text{and}\ \  \bar \sigma_i\cdot \bar \sigma_{i+1}= \overline{s_0( x)}
 \]
where $\sigma_{i},\sigma_{i+1}\in Z(X)_1$ such that $\sigma_{i+1}=\sigma^{op}_i$ and $x=\min \sigma_i$; and
\[\bar \sigma_1\cdots  \overline{s_{n_i}( \sigma_i)}\cdot \bar \sigma_{i+1}\cdots \bar \sigma_k=
\bar \sigma_1\cdots  \bar \sigma_{i}\cdot \overline{s_0( \sigma_{i+1})} \cdots \bar \sigma_k\ \ \text{for}   \ \ i=1,...,k-1.\]
Then
\[\widehat{\mathbf{\Omega}}_n X \cong \{ \bar \sigma_1\cdots \bar \sigma_k \in
\widehat{\mathbf{\Omega}}'_n X \mid  \min \sigma_{1}=x_0\},\]
and the monoidal structure $ \widehat{\mathbf{\Omega}}X \times  \widehat{\mathbf{\Omega}}X \to  \widehat{\mathbf{\Omega}}X$ is induced by concatenation of words with unit $e=s_0( x_0).$
In particular,     $ \widehat{\mathbf{\Omega}}_0 X$ is a group. Let $\widehat{\mathbf{\Omega}}''X$ be the set  obtained from  $\widehat{\mathbf{\Omega}}'X$ by removing the condition
 $\max \sigma_k=x_0.$

Let  $\widehat{\mathbf{\Lambda}} X=\{\widehat{\mathbf{\Lambda}}_n X\}_{n\geq 0}$ be  a set
 $\widehat{\mathbf{\Lambda}} X={\mathbf{\Lambda}}' X/\sim, $
where
${\mathbf{\Lambda}}' X$ is   a subset of the (set-theoretical) cartesian product
   $X\times \widehat{\mathbf{\Omega}}''X$  of two graded sets $X$ and $\widehat{\mathbf{\Omega}}''X:$
\begin{equation}
{\mathbf{\Lambda}}'_n X \! =\!
\{\! (x\,,\, \bar \sigma_1\cdots \bar \sigma_k ) \in\bigcup_{p+q=n}\! X_{p}\!\times \widehat{\mathbf{\Omega}}''_{q} X \mid
\min x=\max \sigma_k,\, \max x=\min \sigma_1\}\!
\end{equation}
and $\sim$ is defined
via the relation
\[ (s_{p}(x),y)\sim (x,\eta_1(y)). \]
The face operators
 \[d^0_i,d^1_i,d^2_i: \widehat{\mathbf{\Lambda}}_n X \rightarrow
 \widehat{\mathbf{\Lambda}}_{n-1} X \]
  are  defined
  for
 $ (x\,,  y)\in X_p\times \widehat{\mathbf{\Omega}}''_q X
\rightarrow   \widehat{\mathbf{\Lambda}}_n X $ ( $p+q=n$)  by
\[
 \begin{array}{llllll}
 d^0_i(x,y)= \left\{
   \begin{array}{llll}
  \left (\min x\, ,\,    \bar x  \cdot y \right), &  & i=1 ,
   \newline \vspace{1mm}\\
   \left( {\partial _{i}\cdots \partial _{p+1}(x)}\, ,\, \overline {\partial _0\cdots \partial _{i-2}(x)}    \cdot y \right), & &  2\leq i \leq p,           \newline \vspace{1mm}  \\
  (x\, , d^0_{i-p}(y))   , & &  p<i\leq n,
   \end{array}
 \right.
 \\
 \\
 d^1_i(x,y) = \left\{
   \begin{array}{llllllllll}
   ( \partial _{i-1}(x)\, ,  y), &  \hspace{1.50in}  1\leq i \leq p,\newline \vspace{1mm}\\
   (x ,d^1_{i-p}(y))   , &        \hspace{1.50in}   p<i\leq n,
   \end{array}
 \right.
 \\
 \\
 d^2_i(x,y)=\,\,\left(\, \partial_{0}\cdots \partial_{i-1}(x)\,,\, y\cdot \overline{\partial_{i+1}\cdots\partial_{m}(x)}\,\right),
 \hspace{0.35in}  1\leq i \leq p,
 \end{array}
\]
and
the degeneracy maps
\[  \eta_j :  \widehat{\mathbf{\Lambda}} _{n,k}(X) \rightarrow
 \widehat{\mathbf{\Lambda}}_{n+1,k}(X)\ \ \text{for}\ \  j=1,...,n+k+1  \]
by
\begin{equation*}
\eta_j(x,y)=\left\{ \begin{array}{llll} (s_{j-1}(x), y),  &1\leq j\leq p+1, \\
                       (x, \eta_{j-p}(y)), & p+1\leq j\leq n+1.
\end{array}
\right.
\end{equation*}

We have  the  short sequence
\[
 \widehat{\mathbf{\Omega}} X   \overset{i }{\longrightarrow}
  \widehat{\mathbf{\Lambda}}X   \overset{pr}{\longrightarrow} X
\]
of maps of sets where $i$ is defined by  $i(y)=(x_0, y)$, for any $y \in \widehat{\mathbf{\Omega}} X $
while $pr(x,y)=x$ for
$(x,y)\in   \widehat{\mathbf{\Lambda}} X.$
The set map $i: \widehat{\mathbf{\Omega}} X \to \widehat{\mathbf{\Lambda}}X$ induces a continuous map
 $ \iota: |\widehat{\mathbf{\Omega}} X|  \to |\widehat{\mathbf{\Lambda}}X |$. The projection
$pr : \widehat{\mathbf{\Lambda}}X  {\longrightarrow} X$ together with the continuous map $ \varphi:F_n\rightarrow \Delta^n$
induces a continuous and cellular map $\zeta: |\widehat{\mathbf{\Lambda}}X|  \to |X|$.

\begin{proposition}\label{quasifree}

For a pointed connected simplicial set  $(X,x_0)$  the short sequence
\[
 |\widehat{\mathbf{\Omega}} X|   \overset{\iota}{\longrightarrow}
  |\widehat{\mathbf{\Lambda}}X|   \overset{\zeta}{\longrightarrow} |X|
\]
is  a quasi-fibration.
\end{proposition}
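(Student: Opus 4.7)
The plan is to verify the quasi-fibration property via the standard skeleton-filtration argument combined with the classical gluing criterion for quasi-fibrations (a map which restricts to a quasi-fibration over each piece of an appropriate increasing filtration is itself a quasi-fibration, as in Dold's recognition theorem). I would let $X^{(n)}$ denote the $n$-skeleton of $X$, and prove by induction on $n$ that $\zeta_n: \zeta^{-1}(|X^{(n)}|) \to |X^{(n)}|$ is a quasi-fibration, each of whose fibers is weakly homotopy equivalent to $|\widehat{\mathbf{\Omega}}X|$.

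The base case $n=0$ amounts to identifying all fibers over the discrete set $|X^{(0)}|$ with $|\widehat{\mathbf{\Omega}}X|$. Over the basepoint $x_0$ this is immediate from the explicit construction in section 3.5, since $\iota$ is given by $y \mapsto (x_0,y)$ and thus $\zeta^{-1}(x_0)=\iota(|\widehat{\mathbf{\Omega}}X|)$. For any other vertex $v$, path-connectedness of $X$ together with the formal inversion of $1$-simplices in the definition of $Z(X)$ yields a sequence of $1$-simplices (and their formal inverses) from $x_0$ to $v$; concatenation with this sequence induces a self-homotopy equivalence conjugating the fiber over $v$ with the fiber over $x_0$, so all fibers over $|X^{(0)}|$ are weakly equivalent to $|\widehat{\mathbf{\Omega}}X|$.

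For the inductive step, I would fix a non-degenerate $n$-simplex $\sigma: \Delta^n \to X$ attaching $|X^{(n)}|$ to $|X^{(n-1)}|$ and analyze the preimage of the open interior of $|\Delta^n|$ under $\zeta$. By the construction of $|\widehat{\mathbf{\Lambda}}X|$ in section 3.3, this preimage is assembled from cells of the form $F_{n_0} \times |I^r|$ whose freehedral factor projects to $|\Delta^n|$ via $\varphi: F_{n_0}\to |\Delta^{n_0}|$ of (\ref{varphi}). The key geometric fact is that $\varphi$ collapses those faces of $F_n$ labeled $i_{s_t}][i_{s_t},\ldots,i_{s_{t+1}}]\ldots [i_{s_{t-1}},\ldots,i_{s_t}]$ to a single vertex of $|\Delta^n|$. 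Combined with the explicit description of $\widehat{\mathbf{\Lambda}}X$ as equivalence classes of pairs $(x,\bar\sigma_1\cdots\bar\sigma_k)$, this lets me construct an explicit fiberwise deformation retraction of $\zeta^{-1}(\mathrm{int}(|\Delta^n|))$ onto a trivial fibration with fiber $|\widehat{\mathbf{\Omega}}X|$ by contracting the freehedral apex onto its collapsed vertex and extracting a loop word in $\widehat{\mathbf{\Omega}}X$ from the remaining cubical data.

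The main obstacle will be controlling the behavior of the fibers as one approaches the boundary of a top cell, i.e. verifying the local triviality up to homotopy needed in the gluing lemma. Concretely, for a distinguished neighborhood $U$ of a boundary point $y$, the inclusion $\zeta_n^{-1}(y)\hookrightarrow \zeta_n^{-1}(U)$ must be a weak equivalence. This requires exact compatibility between the freehedral face operators $\delta^i_0,\delta^i_1,\delta^i_2$ and the operations $d^0_i,d^1_i,d^2_i$ of section 3.5 that insert boundary faces of $\sigma$ into the word $\bar\sigma_1\cdots\bar\sigma_k$. The definitions of these face maps were crafted precisely so that the freehedral truncations correspond cell-by-cell to these insertions, so once this compatibility is verified the inductive step closes by the standard gluing lemma and the proposition follows.
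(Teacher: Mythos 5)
Your approach is genuinely different from the paper's, though both draw from the Dold--Thom toolkit. The paper applies the open-covering criterion directly: barycentrically subdivide $|X|$, take for each subdivided simplex a small contractible open neighborhood $U_\sigma$, close the collection $\{U_\sigma\}$ under finite intersections to get a cover $\mathcal{U}$, and then verify that for every $U \in \mathcal{U}$ and every $x \in U$ the inclusion $\zeta^{-1}(x) \hookrightarrow \zeta^{-1}(U)$ is a homotopy equivalence. Contractibility of each $U$ in the cover is what makes the required deformation retraction of $\zeta^{-1}(U)$ onto $\zeta^{-1}(x)$ tractable. You instead filter $|X|$ by skeleta and invoke the gluing/attaching theorem for quasi-fibrations, inducting on dimension. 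In principle this also works, but it puts all the weight precisely on what you call "the main obstacle": verifying that for a distinguished neighborhood $U$ of a boundary point, $\zeta_n^{-1}(y) \hookrightarrow \zeta_n^{-1}(U)$ is a weak equivalence. In the paper's formulation that step is still present, but is localized to a contractible set from a fixed open cover, which is easier to handle uniformly; in yours it must be checked along each cell-attaching map, which is where the explicit compatibility between the freehedral face operators $\delta^i_\epsilon$ and the operations $d^i_\epsilon$ must be invoked. You identify this correctly but leave it as a sketch, as does the paper (which defers to Proposition 2(iii) of \cite{RS}).

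One additional caution in your base case: the assertion that $\zeta^{-1}(x_0) = \iota(|\widehat{\mathbf{\Omega}}X|)$ is not "immediate" from the formula $\iota(y)=(x_0,y)$. A priori $\zeta^{-1}(x_0)$ also contains points $[g,(\mathbf{x},\mathbf{y})]$ with $g:\Delta^{n_0}\vee T\to X$, $n_0>0$, where $\mathbf{x}$ lies in a face of $F_{n_0}$ that $\varphi$ collapses to a vertex of $|\Delta^{n_0}|$ mapped by $g$ to $x_0$. One needs to check that every such representative is identified, under the face and degeneracy relations defining the realization, with one having trivial first bead (so $n_0=0$). This is true — the collapsed faces of $F_{n_0}$ are boundary faces reached by $\delta^i_0$ and $\delta^i_2$, so the representative can be pushed down to a smaller cell, and iterating this eventually lands in an $F_0$-cell — but it is a genuine argument that you should spell out rather than take as given. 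It also interacts with your observation that other vertices $v$ have fibers homotopy equivalent to the one over $x_0$; that too uses the inverted $1$-simplices of $Z(X)$ and requires a word or two of justification.
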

\begin{proof}
The result follows from essentially the same argument as in Proposition 2(iii) in \cite{RS} which we recall here for completeness.
Recall $|X|$ is a space defined as a colimit of standard topological simplices with identifications given by the face and degeneracy maps of $X$. Take the barycentric subdivision of each standard simplex in the colimit to obtain a finer subdivision of $|X|$ into simplices. For each simplex $\sigma \subset |X|$ in this subdivision let $U_{\sigma}$ be a small open neighborhood containing $\sigma$ as a deformation retract; in particular, each $U_{\sigma}$ is contractible.  Let $\mathcal{U}$ be the smallest collection of open sets containing $\{U_{\sigma} \}$ which is closed under finite intersections. Then $\mathcal{U}$ is an open covering of $|X|$ with the property that for any $U \in \mathcal{U}$ and any $x \in U$, $\zeta^{-1}(x)\hookrightarrow \zeta^{-1}(U)$ is a homotopy equivalence. It follows that $\zeta$ satisfies the criterion in \cite{Dold- Thom}  to be a quasi-fibration.
\end{proof}

A necklical model for the path fibration was  described in \cite{RS}. More precisely, the following theorem was proved.

\begin{theorem}\label{loopmodel}
Let $Y=|X|$ be the geometric realization of a path connected simplicial set $X.$
Let $ \Omega Y \overset{i}{\rightarrow} PY\overset{\pi} \rightarrow Y$ be  the path fibration on $Y.$ Then  there is a  commutative diagram
\begin{equation}\label{diagram}
\begin{array}{cccccc}
   |\widehat{\mathbf{\Omega}} X| & \overset{\omega}{\longrightarrow}                  &                 \Omega Y \\
        \iota  \downarrow            &                  &      \hspace{-0.1in} \iota   \downarrow \\
      |\widehat{\mathbf{P}}   X|  &  \overset{p}{\longrightarrow} &                     PY                    \\
     \xi \downarrow    &                                    &       \hspace{-0.1in}\pi     \downarrow           \\
 \hspace{0.1in}|X| & \overset{Id}{\longrightarrow}                    &                     Y
\end{array}
\end{equation}
in which
  $\omega$ is a monoidal map and homotopy equivalence.
\end{theorem}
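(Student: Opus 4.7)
My approach is to establish both the combinatorial sequence $|\widehat{\mathbf{\Omega}} X| \to |\widehat{\mathbf{P}}X| \to |X|$ and the topological path fibration as (quasi-)fibrations with contractible total spaces over $|X|$, then extract $\omega$ being a weak equivalence from a comparison of long exact sequences of homotopy groups, upgrade to a genuine homotopy equivalence via Whitehead's theorem, and finally verify monoidality directly. First I would construct the comparison map $p: |\widehat{\mathbf{P}}X| \to PY$ cell-by-cell: given a representative labeled by a necklace map $f: T = \Delta^{n_1} \vee \cdots \vee \Delta^{n_k} \to X$ with cubical parameter $(t_1, \ldots, t_k) \in |I^{n_1-1}| \times \cdots \times |I^{n_k-1}|$, produce $k$ parametrized piecewise-linear paths by letting $t_i$ trace out a path in $f(\Delta^{n_i})$ from first to last vertex via the cubical parametrization of a simplex used in Adams' cobar construction, and concatenate them to form a Moore path in $|X|$. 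This assignment descends to $\sim$-equivalence classes because the necklical face, degeneracy and collapse relations were designed to match cubical face relations and path reparametrizations; restricting to necklaces that return to $x_0$ yields $\omega$, and the whole diagram commutes by construction.

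Next I would verify that $\xi: |\widehat{\mathbf{P}}X| \to |X|$ is a quasi-fibration using the Dold-Thom criterion precisely as in the proof of Proposition \ref{quasifree}: barycentrically subdivide $|X|$, cover by contractible open neighborhoods $U_\sigma$ of each simplex, close under finite intersections, and for each $x \in U \in \mathcal{U}$ check that $\xi^{-1}(x) \hookrightarrow \xi^{-1}(U)$ is a homotopy equivalence. Contractibility of $|\widehat{\mathbf{P}}X|$ would follow by a canonical retraction that shrinks the last bead of every necklace along a one-parameter family and then applies a type $(iii)$ collapse to remove it, iterating until one reaches the degenerate $0$-cell corresponding to the constant path at $x_0$. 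The main obstacle I expect is the verification of the Dold-Thom hypothesis: showing that sliding the endpoint of a representative Moore path within $U$ induces a deformation of $\xi^{-1}(U)$ onto $\xi^{-1}(x)$ compatible with the necklical cell structure, which requires careful combined use of the Alexander-Whitney-type $\delta_0^i$ faces and the type $(iii)$ collapses.

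Given these two inputs, the commutative diagram yields a map of long exact homotopy sequences in which $\pi_n$ of both middle terms vanishes, forcing $\omega_*: \pi_n(|\widehat{\mathbf{\Omega}} X|) \to \pi_n(\Omega Y)$ to be an isomorphism for every $n \geq 0$; since $|\widehat{\mathbf{\Omega}} X|$ is a CW complex by construction and $\Omega Y$ has the homotopy type of one, Whitehead's theorem upgrades $\omega$ to a homotopy equivalence. Finally, for the monoidal statement, the product on $\widehat{\mathbf{\Omega}} X$ given by concatenation of necklace words with unit $e = s_0(x_0)$ maps under $\omega$ to concatenation of Moore loops, and the defining relations of $\widehat{\mathbf{\Omega}} X$ (inverse $1$-simplex cancellation, degeneracy collapses) correspond precisely to the identifications built into the Moore construction making concatenation strictly associative with a two-sided unit. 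Hence $\omega$ is a continuous monoid homomorphism, completing the argument.
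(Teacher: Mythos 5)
The paper does not actually prove Theorem~\ref{loopmodel} in this text: it is stated as a recalled result, with the proof deferred to the cited reference \cite{RS}. What the paper does supply is the construction of $\omega$ (recalled inside the proof of Theorem~\ref{freeloopmodel}): one builds $\omega_n : |I^{n-1}| \to P_{0,n}|\Delta^n|$ by applying the exponential law to the cellular projection $|\varphi| : |I^{n-1}| \times |I| \to |\Delta^n|$, and then concatenates. Your description of $p$ and $\omega$ via ``cubical parametrization of a simplex \`a la Adams, then concatenate'' is essentially this same construction, and your overall strategy (Dold--Thom quasi-fibration for $\xi$, contractibility of $|\widehat{\mathbf{P}}X|$, comparison of long exact sequences, Whitehead, then monoidality by inspection) is the standard one that underlies the argument in \cite{RS}; so on the level of architecture you are aligned with the source.

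There is, however, a concrete flaw in your contraction of $|\widehat{\mathbf{P}}X|$. You propose shrinking the \emph{last} bead and then removing it by a type~$(iii)$ collapse. But $|\widehat{\mathbf{P}}X|$ is built from augmented necklaces whose \emph{last} vertex is pinned at $x_0$ (matching $PY$ being paths that \emph{end} at $b$); a type~$(iii)$ collapse of the last bead $\Delta^{n_k}$ sends it to the last vertex of $\Delta^{n_{k-1}}$, i.e.\ to the first vertex of $\Delta^{n_k}$, which is generally \emph{not} $x_0$ --- so the resulting cell is no longer an admissible necklace in $\widehat{\mathbf{P}}X$. The correct direction mirrors the homotopy $H_t(\gamma)(s) = \gamma(t + (1-t)s)$ contracting $PY$ to the constant path at $b$: one must absorb the necklace from the \emph{first} bead, which is precisely why augmented necklaces allow the first bead to be $\Delta^0$. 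Separately, you should state the deformation retraction as a genuine continuous homotopy compatible with the cell identifications (a one-parameter family through the cube coordinates that degenerates to the face/collapse maps at the endpoints), rather than as an iterated sequence of cell collapses, since the latter does not by itself give a homotopy. With the contraction corrected, the rest of your outline (quasi-fibration via the open cover $\mathcal{U}$, five-lemma on long exact sequences with the $\pi_0$ case handled separately, Whitehead on CW models, and monoidality from strict Moore-loop concatenation) goes through.
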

In the above statement   $|\widehat{\mathbf{P}}   X|$ is a contractible cellular space obtained by glueing cubes corresponding to \textit{augmented necklaces} inside $X$ with last vertex fixed at $x_0 \in X_0$ and $PY$ denotes the space of paths with endpoint fixed at $b$ and $\Omega Y$ the space of loops based at $b$. An augmented necklace is just a necklace in which the first bead is allowed to be $\Delta^0$; see \cite{RS} for more details. In the proof of our the next theorem we recall the construction of $\omega$.

The main result of this article is the following analogue result for the free loop fibration.

\begin{theorem}\label{freeloopmodel}
Let $Y=|X|$ be the geometric realization of a path connected simplicial set $X.$
Let $ \Omega Y \overset{i}{\rightarrow} \Lambda Y\overset{\varrho} \longrightarrow Y$ be  the free loop fibration on $Y.$ There is a  commutative diagram
\begin{equation}\label{freediagram}
\begin{array}{cccccc}
   |\widehat{\mathbf{\Omega}} X| & \overset{\omega}{\longrightarrow}                  &                 \Omega Y \\
        \iota  \downarrow            &                  &      \hspace{-0.1in} \iota   \downarrow \\
      |\widehat{\mathbf{\Lambda}}   X|  &  \overset{\Upsilon}{\longrightarrow} &                     \Lambda Y                    \\
     \zeta \downarrow    &                                    &       \hspace{-0.1in}  \varrho     \downarrow           \\
 \hspace{0.1in}|X| & \overset{Id}{\longrightarrow}                    &                     Y
\end{array}
\end{equation}
in which
  $\Upsilon$ and $\omega$ are homotopy equivalences.
\end{theorem}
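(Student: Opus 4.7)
The plan is to construct the map $\Upsilon$ cell by cell, verify commutativity of \eqref{freediagram}, and then deduce the homotopy equivalence from Proposition~\ref{quasifree} together with Theorem~\ref{loopmodel} via a five lemma comparison of the two fibration sequences. First, I would define $\Upsilon$ on a cell $F_p \times |I^q|$ labeled by a pair $(x,y) = (x,\bar\sigma_1\cdots \bar\sigma_k) \in X_p \times \widehat{\mathbf{\Omega}}''_q X$ as follows. The map $\omega$ from Theorem~\ref{loopmodel} already provides, for each $s \in |I^q|$, a based path $\omega(y)(s)$ from $\min\sigma_1$ to $\max\sigma_k$ in $Y$. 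Since the defining constraint $\max x = \min\sigma_1$ and $\min x = \max\sigma_k$ holds, the concatenation of (the path in $|x|$ from $\min x$ to $\max x$ followed by) $\omega(y)(s)$ is a loop at $\min x$. The role of $t \in F_p$ is then twofold: via the projection $\varphi : F_p \to |\Delta^p|$ of \eqref{varphi} it selects a basepoint $\varphi(t) \in |x|$, and via the remaining $F_p$-coordinates it reparametrizes this loop to start and end at $\varphi(t)$, going forward along the portion of $x$ from $\varphi(t)$ to $\max x$, then through $\omega(y)(s)$, and finally along $x$ from $\min x$ back to $\varphi(t)$. The reason $F_n$ rather than $|I^n|$ is the correct modeling polytope here is precisely that its face operators $d^0_i, d^1_i, d^2_i$ — in particular the $d^2_i$ faces coming from the ``backwards'' truncations that rotate the basepoint through $x$ — match the face maps of $\widehat{\mathbf{\Lambda}}X$ recalled in Section~3.1.

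Second, I would verify that $\Upsilon$ is continuous and well defined, i.e.\ compatible with the equivalence relations \eqref{rule1}, \eqref{rule2}, the degeneracy identification $(s_p(x),y) \sim (x,\eta_1(y))$, and the face/collapse gluings $\delta^i_\epsilon, \varsigma^j$ on the polytopes $F_p \times |I^q|$. This is the most delicate step and I expect it to be the main technical obstacle: each relation on cells must correspond to a geometric identity among the loops produced above. For example, the $d^2_i$ face geometrically represents the basepoint having travelled all the way past the $i$-th vertex of $x$, so the resulting free loop coincides with the one associated to the rotated data $(\partial_0\cdots\partial_{i-1}(x), y\cdot\overline{\partial_{i+1}\cdots\partial_m(x)})$; a similar verification is needed for each generator of the face and degeneracy relations and for the co-degeneracy collapses $\varsigma^j$. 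Commutativity of \eqref{freediagram} is then essentially by construction: the bottom square commutes because $\zeta$ is induced by $pr$ and $\varphi$ while $\varrho \circ \Upsilon$ evaluates the free loop at its basepoint $\varphi(t) \in |x|$; the top square commutes because on the subset $x = x_0$ one has $F_0 = *$ and the construction reduces to $\omega$.

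Third, with \eqref{freediagram} in hand I would exploit the quasi-fibration structure. Proposition~\ref{quasifree} gives a long exact sequence of homotopy groups for the left column, and $\varrho : \Lambda Y \to Y$ is a Serre fibration with fibre $\Omega Y$ giving one for the right column. These two long exact sequences are compatible under $\omega$, $\Upsilon$, and $\mathrm{id}_{|X|}$. By Theorem~\ref{loopmodel} the map $\omega$ is a homotopy equivalence, and the bottom map is the identity, so the five lemma applied basepoint by basepoint implies $\Upsilon$ induces isomorphisms on all homotopy groups, i.e.\ is a weak homotopy equivalence.

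Finally, both $|\widehat{\mathbf{\Lambda}}X|$ (built by gluing the CW polytopes $F_p \times |I^q|$) and $\Lambda Y$ (for $Y = |X|$ a CW complex) have the homotopy type of CW complexes, so Whitehead's theorem upgrades the weak equivalence to a genuine homotopy equivalence, completing the proof. The formal part of the argument is thus short; essentially all of the work lies in setting up $\Upsilon$ cellularly and matching the freehedral combinatorics of $F_n$ with the structural maps of $\widehat{\mathbf{\Lambda}}X$.
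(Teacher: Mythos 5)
Your proposal follows essentially the same strategy as the paper's proof: construct $\Upsilon$ cellwise by using $\varphi$ to locate the basepoint in $|x|$ and splitting the freehedral coordinate into forward and backward path segments glued to the $\omega$-loop, verify commutativity of the diagram by construction, and conclude via the long exact sequence comparison between the quasi-fibration of Proposition~\ref{quasifree} and the genuine free loop fibration, using that $\omega$ is a homotopy equivalence by Theorem~\ref{loopmodel}. You additionally flag the well-definedness checks and invoke Whitehead's theorem explicitly, which the paper leaves implicit, but the underlying argument is the same.
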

\begin{proof}
First consider the cellular map $\varphi$  given by (\ref{varphi}). We need to represent the underlying cube $I^n=I^{n-1}\times I$ of $F_n$ so that the both $(n-1)$-cubes   $I^{n-1}\times 0$ and $I^{n-1}\times 1$ (labelled by $d^0_1(F_n)$  and $d^2_i(F_n)$) are totally degenerated under $\varphi$ at the vertices $v_0 $ and $v_n$ respectively. For example,
  only subdivide   the  faces  $d^0_i(I^n)$  for $2\leq i\leq n$  to obtain the two cells $d^0_i(F_n)$ and $d^2_i(F_n)$   for each $i$ such that the  vertices of $F_n$ lying in
$d^0_i(F_n)\cap d^2_i(F_n)\cap e $  for $i=2,...,n$  form an increasing sequence on the edge $e:=(x_1,0,...,0)$ of $I^n$ (see Figure 2).
Then fix the map
\[\omega_{n}: |I^{n-1}|\rightarrow  P_{0,n}|\Delta^n|  \subset |\Delta^n|^{I}  \]
by means of the exponential law applied for $|\varphi|: |I^{n-1}|\times |I|\rightarrow |\Delta^n| $, where $P_{0,n}|\Delta^n|$ denotes the space of paths in $|\Delta^n|$ starting at $v_0$ and ending at $v_n$.

Recall the space $|\widehat{\mathbf{\Omega}} X|$ is constructed by glueing cubical cells, so any point in $|\widehat{\mathbf{\Omega}} X|$ is given by an equivalence class $y=[f,(\mathbf{t}_1,...,\mathbf{t}_{k})]$ where  $f: T \to X$ is a (non-degenerate) map of simplicial sets
  sending the first and last vertices of a necklace $T$ to a fixed point $x \in X_0,$ and
  $(\mathbf{t}_1,...,\mathbf{t}_{k} ) \in |I^{n_1}|\times \cdots \times |I^{n_k}|.$  Let $|f|: |T | \to Y=|X|$ be the induced map at the level of geometric realizations. Then define
\[\omega(y):= |f| \circ   \omega_T \ \  \text{for}  \  \  \omega_T:= \omega_{n_k}(\mathbf{t}_k)* \cdots * \omega_{n_1}(\mathbf{t}_1) \]
where $*$ denotes path concatenation in $|T|.$

We now construct $\Upsilon$. Any point
 $(x,y)\in
| X_p\times \widehat{\mathbf{\Omega}}'_r X | \to |\widehat{\mathbf{\Lambda}}X |$ is given by an equivalence class $(x,y)=[g,(\mathbf{x}, \mathbf{y})]$ where $g: R \to X$ a (non-degenerate) map of simplicial sets, $R=\Delta^{n_0} \vee T$ a closed necklace, and $(\mathbf{x}, \mathbf{y}) \in F_{n_0} \times |I^{n_1}|\times \cdots \times |I^{n_k}|.$   Let $\lambda:=|g|\circ \omega_T;$ so $\lambda$ is a path from $|g|( |\min T|) $ to $|g|(| \max T | ).$

For $n_0=0,  \Upsilon (x,y)$ is defined as the loop $\omega(y)$ based at $x= |g|( |\Delta^{n_0}| ) \in Y$.  Let $n_0 \geq 1$ and $(\mathbf{t},s)\in F_{n_0}.$  Define $\Upsilon(x,y)=\beta\ast \lambda \ast \alpha ,$ where $\alpha$ is  a path from $|g|\circ |\varphi|(\mathbf{t},s)$ to $|g|\circ |\varphi|(\mathbf{t},1)$ defined by the restriction of $|g|\circ \omega_{n_0}(\mathbf{t})$ to $[s,1]$ and
 $\beta$ is  a path from $|g|\circ |\varphi|(\mathbf{t},0)$ to $|g|\circ |\varphi|(\mathbf{t},s)$ defined by the restriction of $|g|\circ \omega_{n_0}(\mathbf{t})$ to $[0,s].$

Finally, apply Proposition \ref{quasifree}, the fact that a quasi-fibration gives rise to a long exact sequence in homotopy groups, and the result of \cite{RS} that says that $\omega$ is a homotopy equivalence to obtain the desired result.

\end{proof}
\begin{remark} \normalfont
The maps $\Upsilon$ and $\omega$ above are canonically defined by means of the maps $\varphi$ given by (\ref{varphi}). Furthermore,
$\Upsilon$ is an inclusion so that it detects a deformation retract of $\Lambda Y$ having the cellular structure determined by the simplicial structure of $X.$
\end{remark}

\unitlength 1mm 
\linethickness{0.4pt}
\ifx\plotpoint\undefined\newsavebox{\plotpoint}\fi 
\begin{picture}(107.5,93.862)(0,0)
\put(17.77,44.918){\line(1,0){25.375}}
\put(43.145,44.918){\line(0,1){21}}
\put(43.145,65.918){\line(-1,0){25.375}}
\put(17.77,65.918){\line(0,-1){20.875}}
\put(43.145,65.793){\circle*{1}}
\put(17.895,65.668){\circle*{1}}
\put(19.395,91.216){\circle*{1}}
\put(67.478,91.393){\circle*{1}}
\put(17.77,44.918){\circle*{1}}
\put(43.145,44.918){\circle*{1}}
\put(28.416,31.064){\line(1,0){22.875}}
\put(28.416,30.814){\circle*{1}}
\put(12.666,21.939){\circle*{1}}
\put(12.541,4.439){\circle*{1}}
\put(37.041,4.314){\circle*{1}}
\put(37.041,21.564){\circle*{1}}
\put(20.916,21.564){\circle*{1}}
\put(28.291,13.314){\circle*{1}}
\put(51.291,13.314){\circle*{1}}
\put(28.541,4.314){\circle*{1}}
\put(51.416,30.939){\circle*{1}}
\put(49.875,55){\vector(1,0){9.75}}
\put(49.273,77.49){\vector(1,0){9.75}}
\put(49.096,91.279){\vector(1,0){9.75}}
\multiput(28.416,30.814)(-.058988764,-.0337078652){267}{\line(-1,0){.058988764}}
\put(12.666,21.814){\line(1,0){24.5}}
\multiput(37.166,21.814)(.0535037879,.0336174242){264}{\line(1,0){.0535037879}}
\put(12.666,21.564){\line(0,-1){17.25}}
\multiput(12.666,4.314)(6.09375,-.03125){4}{\line(1,0){6.09375}}
\put(37.041,4.189){\line(0,1){17.5}}
\put(28.416,30.939){\line(0,-1){8.625}}
\put(28.416,21.189){\line(0,-1){8}}
\put(28.416,13.189){\line(1,0){8.25}}
\put(51.291,30.689){\line(0,-1){17.375}}
\multiput(51.291,13.314)(-.0529026217,-.0337078652){267}{\line(-1,0){.0529026217}}
\put(20.791,4.064){\line(0,1){17.75}}
\multiput(28.666,4.189)(.05,.03359375){160}{\line(1,0){.05}}
\multiput(37.541,9.939)(.047680412,.033505155){97}{\line(1,0){.047680412}}
\multiput(12.666,4.064)(.054964539,.033687943){141}{\line(1,0){.054964539}}
\multiput(21.291,9.064)(.05987395,.033613445){119}{\line(1,0){.05987395}}
\put(37.541,13.064){\line(1,0){13.625}}
\put(20.791,4.064){\circle*{1.031}}
\put(41.666,12.814){\circle*{1}}
\put(60.25,16.636){\vector(1,0){6.25}}
\multiput(74.5,12.875)(.0336990596,.0583855799){319}{\line(0,1){.0583855799}}
\multiput(90.125,3.875)(-.0587121212,.0336174242){264}{\line(-1,0){.0587121212}}
\multiput(85.375,31.25)(.033687943,-.192375887){141}{\line(0,-1){.192375887}}
\put(74.5,12.625){\line(1,0){13.75}}
\put(89.125,12.625){\line(1,0){10.625}}
\multiput(90.25,4)(.0366210938,.0336914063){256}{\line(1,0){.0366210938}}
\multiput(85.25,31.25)(.0337112172,-.0441527446){419}{\line(0,-1){.0441527446}}
\put(22.471,16.494){\line(1,0){.9565}}
\put(24.384,16.494){\line(1,0){.9565}}
\put(26.297,16.494){\line(1,0){.9565}}
\put(28.21,16.494){\line(1,0){.9565}}
\put(30.123,16.494){\line(1,0){.9565}}
\put(32.036,16.494){\line(1,0){.9565}}
\put(33.949,16.494){\line(1,0){.9565}}
\put(35.862,16.494){\line(1,0){.9565}}
\put(37.775,16.494){\line(1,0){.9565}}
\put(39.688,16.494){\line(1,0){.9565}}
\put(41.601,16.494){\line(1,0){.9565}}
\put(43.514,16.494){\line(1,0){.9565}}
\put(17.68,55.43){\line(1,0){.976}}
\put(19.632,55.42){\line(1,0){.976}}
\put(21.584,55.41){\line(1,0){.976}}
\put(23.535,55.401){\line(1,0){.976}}
\put(25.487,55.391){\line(1,0){.976}}
\put(27.439,55.382){\line(1,0){.976}}
\put(29.391,55.372){\line(1,0){.976}}
\put(31.343,55.362){\line(1,0){.976}}
\put(33.295,55.353){\line(1,0){.976}}
\put(35.247,55.343){\line(1,0){.976}}
\put(37.199,55.334){\line(1,0){.976}}
\put(39.151,55.324){\line(1,0){.976}}
\put(41.103,55.314){\line(1,0){.976}}
\thicklines
\multiput(107.055,19.93)(-.0625,.0328571){14}{\line(-1,0){.0625}}
\multiput(105.305,20.85)(-.0625,.0328571){14}{\line(-1,0){.0625}}
\multiput(103.555,21.77)(-.0625,.0328571){14}{\line(-1,0){.0625}}
\multiput(101.805,22.69)(-.0625,.0328571){14}{\line(-1,0){.0625}}
\multiput(100.055,23.61)(-.0625,.0328571){14}{\line(-1,0){.0625}}
\multiput(98.305,24.53)(-.0625,.0328571){14}{\line(-1,0){.0625}}
\multiput(96.555,25.45)(-.0625,.0328571){14}{\line(-1,0){.0625}}
\multiput(94.805,26.37)(-.0625,.0328571){14}{\line(-1,0){.0625}}
\multiput(93.055,27.29)(-.0625,.0328571){14}{\line(-1,0){.0625}}
\multiput(91.305,28.21)(-.0625,.0328571){14}{\line(-1,0){.0625}}
\multiput(89.555,29.13)(-.0625,.0328571){14}{\line(-1,0){.0625}}
\multiput(87.805,30.05)(-.0625,.0328571){14}{\line(-1,0){.0625}}
\multiput(86.055,30.97)(-.0625,.0328571){14}{\line(-1,0){.0625}}
\thinlines
\multiput(74.555,12.555)(.063802,.03125){12}{\line(1,0){.063802}}
\multiput(76.086,13.305)(.063802,.03125){12}{\line(1,0){.063802}}
\multiput(77.617,14.055)(.063802,.03125){12}{\line(1,0){.063802}}
\multiput(79.148,14.805)(.063802,.03125){12}{\line(1,0){.063802}}
\multiput(80.68,15.555)(.0513393,.0334821){14}{\line(1,0){.0513393}}
\multiput(82.117,16.492)(.0513393,.0334821){14}{\line(1,0){.0513393}}
\multiput(83.555,17.43)(.031818,.061364){11}{\line(0,1){.061364}}
\multiput(84.255,18.78)(.031818,.061364){11}{\line(0,1){.061364}}
\multiput(84.955,20.13)(.031818,.061364){11}{\line(0,1){.061364}}
\multiput(85.305,20.805)(.03125,.225){4}{\line(0,1){.225}}
\multiput(85.555,22.605)(.03125,.225){4}{\line(0,1){.225}}
\multiput(85.805,24.405)(.03125,.225){4}{\line(0,1){.225}}
\put(85.93,25.305){\line(0,1){.8571}}
\put(85.715,27.019){\line(0,1){.8571}}
\put(85.501,28.733){\line(0,1){.8571}}
\put(85.287,30.448){\line(0,1){.8571}}
\put(10.041,.689){$0$}
\put(10.041,20.814){$0$}
\put(25.916,31.189){$0$}
\put(25.916,12.814){$0$}
\put(42.541,13.189){$2$}
\put(27.916,.689){$2$}
\put(37.041,.689){$3$}
\put(52.916,12.814){$3$}
\put(52.666,31.189){$3$}
\put(38.666,19.689){$3$}
\put(20.116,22.689){$1$}
\put(20.116,.689){$1$}
\put(71.75,11.55){$0$}
\put(100.45,11.55){$2$}
\put(66.809,42.198){$0$}
\put(91.641,52.75){$1$}
\put(75.409,64){$2$}
\put(30.25,44.875){\circle*{1}}
\put(20.75,46.875){$_{01][12]}$}
\put(18.625,59.625){$_{0][012]}$}
\put(38.77,59.625){\makebox(0,0)[cc]{$_{2][012]}$}}
\put(30.395,64.25){\makebox(0,0)[cc]{$_{02]}$}}
\put(36.895,46.875){\makebox(0,0)[cc]{$_{12][01]}$}}
\put(54.125,57.25){$\Upsilon$}
\put(53.523,79.74){$\Upsilon$}
\put(53.346,93.529){$\Upsilon$}
\put(63,18.886){$\Upsilon$}
\put(16.75,41.25){$0$}
\put(16.75,67.25){$0$}
\put(18.13,73.638){$0$}
\put(66.902,73.638){$0$}
\put(67.05,87.662){$0$}
\put(18.75,87.662){$0$}
\put(29.75,41.25){$1$}
\put(39.635,73.949){$1$}
\put(84.614,73.872){$1$}
\put(42.25,41.25){$2$}
\put(42.25,67.25){$2$}
\put(89.858,70.749){$\lambda$}
\put(95.858,57.999){$\lambda$}
\put(88.576,88.06){$\lambda$}
\put(106.435,9.875){$\lambda$}
\put(29.75,56.05){$a$}
\put(79.239,53.354){$_{\Upsilon(\!a)}$}
\put(32.166,17.05){$a$}
\put(81.75,14.55){$_{\Upsilon\!(\!a)}$}
\put(74.5,12.5){\circle*{1}}
\put(90.125,4){\circle*{1}}
\put(99.375,12.625){\circle*{1}}
\put(85.25,31.25){\circle*{1.031}}
\put(17.75,55.45){\circle*{.5}}
\put(43.125,55.45){\circle*{.5}}
\put(22.541,16.5){\circle*{.5}}
\put(44.541,16.5){\circle*{.5}}
\thicklines
\put(89.386,93.747){\line(-1,0){.9607}}
\put(87.464,93.562){\line(-1,0){.9607}}
\put(85.543,93.378){\line(-1,0){.9607}}
\put(83.621,93.193){\line(-1,0){.9607}}
\put(81.7,93.009){\line(-1,0){.9607}}
\put(79.778,92.825){\line(-1,0){.9607}}
\put(77.857,92.64){\line(-1,0){.9607}}
\put(75.935,92.456){\line(-1,0){.9607}}
\put(74.014,92.271){\line(-1,0){.9607}}
\put(72.092,92.087){\line(-1,0){.9607}}
\put(70.171,91.902){\line(-1,0){.9607}}
\put(68.249,91.718){\line(-1,0){.9607}}
\put(87.798,3.215){\makebox(0,0)[cc]{$1$}}
\put(85.127,33.974){\makebox(0,0)[cc]{$3$}}
\thinlines
\multiput(67.457,91.479)(.076445,-.03301){11}{\line(1,0){.076445}}
\multiput(69.139,90.753)(.076445,-.03301){11}{\line(1,0){.076445}}
\multiput(70.821,90.027)(.076445,-.03301){11}{\line(1,0){.076445}}
\multiput(72.503,89.3)(.076445,-.03301){11}{\line(1,0){.076445}}
\multiput(74.184,88.574)(.076445,-.03301){11}{\line(1,0){.076445}}
\multiput(75.866,87.848)(.076445,-.03301){11}{\line(1,0){.076445}}
\put(76.707,87.485){\line(1,0){.901}}
\put(78.509,87.605){\line(1,0){.901}}
\put(80.311,87.725){\line(1,0){.901}}
\put(82.113,87.845){\line(1,0){.901}}
\multiput(83.014,87.905)(.0338694,.0327015){18}{\line(1,0){.0338694}}
\multiput(84.233,89.082)(.0338694,.0327015){18}{\line(1,0){.0338694}}
\multiput(85.452,90.26)(.0338694,.0327015){18}{\line(1,0){.0338694}}
\multiput(86.672,91.437)(.0338694,.0327015){18}{\line(1,0){.0338694}}
\multiput(87.891,92.614)(.0338694,.0327015){18}{\line(1,0){.0338694}}
\put(18.335,77.885){\line(1,0){21.443}}
\put(67.107,77.464){\line(1,0){19.971}}
\multiput(67.247,77.184)(.065297,-.031852){12}{\line(1,0){.065297}}
\multiput(68.814,76.419)(.065297,-.031852){12}{\line(1,0){.065297}}
\multiput(70.381,75.655)(.065297,-.031852){12}{\line(1,0){.065297}}
\multiput(71.948,74.89)(.065297,-.031852){12}{\line(1,0){.065297}}
\multiput(73.515,74.126)(.065297,-.031852){12}{\line(1,0){.065297}}
\multiput(75.083,73.362)(.065297,-.031852){12}{\line(1,0){.065297}}
\put(75.866,72.979){\line(1,0){.946}}
\put(77.758,72.811){\line(1,0){.946}}
\put(79.65,72.643){\line(1,0){.946}}
\put(81.542,72.475){\line(1,0){.946}}
\put(83.434,72.307){\line(1,0){.946}}
\multiput(85.326,72.138)(.088242,.03374){9}{\line(1,0){.088242}}
\multiput(86.915,72.746)(.088242,.03374){9}{\line(1,0){.088242}}
\multiput(88.503,73.353)(.088242,.03374){9}{\line(1,0){.088242}}
\multiput(90.091,73.96)(.088242,.03374){9}{\line(1,0){.088242}}
\multiput(91.68,74.568)(.088242,.03374){9}{\line(1,0){.088242}}
\multiput(92.474,74.871)(-.081086,.033035){10}{\line(-1,0){.081086}}
\multiput(90.852,75.532)(-.081086,.033035){10}{\line(-1,0){.081086}}
\multiput(89.23,76.193)(-.081086,.033035){10}{\line(-1,0){.081086}}
\multiput(87.609,76.853)(-.081086,.033035){10}{\line(-1,0){.081086}}
\multiput(90.75,53.75)(-.0753546099,-.0336879433){282}{\line(-1,0){.0753546099}}
\multiput(69.5,44.25)(.0336363636,.0727272727){275}{\line(0,1){.0727272727}}
\multiput(78.75,64.25)(.0394736842,-.0337171053){304}{\line(1,0){.0394736842}}
\put(69.43,44.18){\line(1,0){.9861}}
\put(71.402,44.402){\line(1,0){.9861}}
\put(73.374,44.624){\line(1,0){.9861}}
\put(75.346,44.846){\line(1,0){.9861}}
\put(77.319,45.069){\line(1,0){.9861}}
\put(79.291,45.291){\line(1,0){.9861}}
\put(81.263,45.513){\line(1,0){.9861}}
\put(83.235,45.735){\line(1,0){.9861}}
\put(85.207,45.957){\line(1,0){.9861}}
\multiput(87.18,46.18)(.0480769,.0336538){16}{\line(1,0){.0480769}}
\multiput(88.718,47.257)(.0480769,.0336538){16}{\line(1,0){.0480769}}
\multiput(90.257,48.334)(.0480769,.0336538){16}{\line(1,0){.0480769}}
\multiput(91.795,49.41)(.0480769,.0336538){16}{\line(1,0){.0480769}}
\multiput(93.334,50.487)(.0480769,.0336538){16}{\line(1,0){.0480769}}
\multiput(94.872,51.564)(.0480769,.0336538){16}{\line(1,0){.0480769}}
\multiput(96.41,52.641)(.0480769,.0336538){16}{\line(1,0){.0480769}}
\multiput(97.18,53.18)(-.0318182,.0530303){15}{\line(0,1){.0530303}}
\multiput(96.225,54.771)(-.0318182,.0530303){15}{\line(0,1){.0530303}}
\multiput(95.271,56.362)(-.0318182,.0530303){15}{\line(0,1){.0530303}}
\multiput(94.316,57.952)(-.0318182,.0530303){15}{\line(0,1){.0530303}}
\multiput(93.362,59.543)(-.0318182,.0530303){15}{\line(0,1){.0530303}}
\multiput(92.407,61.134)(-.0318182,.0530303){15}{\line(0,1){.0530303}}
\multiput(91.93,61.93)(-.173333,.03){5}{\line(-1,0){.173333}}
\multiput(90.196,62.23)(-.173333,.03){5}{\line(-1,0){.173333}}
\multiput(88.463,62.53)(-.173333,.03){5}{\line(-1,0){.173333}}
\multiput(86.73,62.83)(-.173333,.03){5}{\line(-1,0){.173333}}
\multiput(84.996,63.13)(-.173333,.03){5}{\line(-1,0){.173333}}
\multiput(83.263,63.43)(-.173333,.03){5}{\line(-1,0){.173333}}
\multiput(81.53,63.73)(-.173333,.03){5}{\line(-1,0){.173333}}
\multiput(79.796,64.03)(-.173333,.03){5}{\line(-1,0){.173333}}
\multiput(78.68,64.18)(.03125,-.08125){8}{\line(0,-1){.08125}}
\multiput(79.18,62.88)(.03125,-.08125){8}{\line(0,-1){.08125}}
\multiput(79.68,61.58)(.03125,-.08125){8}{\line(0,-1){.08125}}
\put(79.93,60.93){\line(0,-1){.75}}
\put(79.93,59.43){\line(0,-1){.75}}
\multiput(79.93,58.68)(-.029762,-.095238){7}{\line(0,-1){.095238}}
\multiput(79.513,57.346)(-.029762,-.095238){7}{\line(0,-1){.095238}}
\multiput(79.096,56.013)(-.029762,-.095238){7}{\line(0,-1){.095238}}
\multiput(78.68,54.68)(-.0333333,-.0444444){15}{\line(0,-1){.0444444}}
\multiput(77.68,53.346)(-.0333333,-.0444444){15}{\line(0,-1){.0444444}}
\multiput(76.68,52.013)(-.0333333,-.0444444){15}{\line(0,-1){.0444444}}
\multiput(75.68,50.43)(-.0347222,-.0333333){18}{\line(-1,0){.0347222}}
\multiput(74.43,49.23)(-.0347222,-.0333333){18}{\line(-1,0){.0347222}}
\multiput(73.18,48.03)(-.0347222,-.0333333){18}{\line(-1,0){.0347222}}
\multiput(71.93,46.83)(-.0347222,-.0333333){18}{\line(-1,0){.0347222}}
\multiput(70.68,45.63)(-.0347222,-.0333333){18}{\line(-1,0){.0347222}}
\multiput(74.68,11.93)(.0333333,-.0431373){17}{\line(0,-1){.0431373}}
\multiput(75.813,10.463)(.0333333,-.0431373){17}{\line(0,-1){.0431373}}
\multiput(76.946,8.996)(.0333333,-.0431373){17}{\line(0,-1){.0431373}}
\multiput(78.08,7.53)(.0333333,-.0431373){17}{\line(0,-1){.0431373}}
\multiput(79.213,6.063)(.0333333,-.0431373){17}{\line(0,-1){.0431373}}
\multiput(80.346,4.596)(.0333333,-.0431373){17}{\line(0,-1){.0431373}}
\multiput(81.48,3.13)(.0333333,-.0431373){17}{\line(0,-1){.0431373}}
\multiput(82.613,1.663)(.0333333,-.0431373){17}{\line(0,-1){.0431373}}
\put(83.18,.93){\line(1,0){.95}}
\put(85.08,.93){\line(1,0){.95}}
\put(86.98,.93){\line(1,0){.95}}
\put(88.88,.93){\line(1,0){.95}}
\put(90.78,.93){\line(1,0){.95}}
\put(92.68,.93){\line(1,0){.95}}
\put(94.58,.93){\line(1,0){.95}}
\put(96.48,.93){\line(1,0){.95}}
\multiput(97.43,.93)(.0324675,.0608766){14}{\line(0,1){.0608766}}
\multiput(98.339,2.634)(.0324675,.0608766){14}{\line(0,1){.0608766}}
\multiput(99.248,4.339)(.0324675,.0608766){14}{\line(0,1){.0608766}}
\multiput(100.157,6.043)(.0324675,.0608766){14}{\line(0,1){.0608766}}
\multiput(101.066,7.748)(.0324675,.0608766){14}{\line(0,1){.0608766}}
\multiput(101.975,9.452)(.0324675,.0608766){14}{\line(0,1){.0608766}}
\multiput(102.884,11.157)(.0324675,.0608766){14}{\line(0,1){.0608766}}
\multiput(103.793,12.862)(.0324675,.0608766){14}{\line(0,1){.0608766}}
\multiput(104.702,14.566)(.0324675,.0608766){14}{\line(0,1){.0608766}}
\multiput(105.612,16.271)(.0324675,.0608766){14}{\line(0,1){.0608766}}
\multiput(106.521,17.975)(.0324675,.0608766){14}{\line(0,1){.0608766}}
\put(18.392,77.73){\circle*{1.061}}
\put(39.605,77.73){\circle*{1.061}}
\put(67.183,77.377){\circle*{1.061}}
\put(86.981,77.377){\circle*{1.061}}
\put(78.85,64.295){\circle*{1.061}}
\put(69.304,44.143){\circle*{1.061}}
\put(90.517,53.865){\circle*{1.061}}
\end{picture}

\vspace{0.1in}

\begin{center}
Figure 2. The modelling map $\Upsilon$ for $n_0=0,1,2,3.$
\end{center}

\vspace{0.2in}

\section{Algebraic models for the free loop space and the hat-coHochschild  construction.}

\subsection{Algebraic preliminaries}

We fix a ground commutative ring $\Bbbk$  with unit $1_{\Bbbk}$. All modules are assumed to be  over $\Bbbk.$
We recall some algebraic constructions associated to differential graded coassociative (dgc) coaugmented coalgebras. Recall a dgc coalgebra $(C,d_C, \Delta)$ is \text{coaugmented} if it is equipped with a map of dgc coalgebras $\epsilon: \Bbbk \to C$. Denote $\overline{C}= \text{coker} (\epsilon)$.
 Given a coaugmented dgc coalgebra $(C, d_C, \Delta, \epsilon)$ which is free as a $\Bbbk$-module on each degree, the \textit{cobar construction} of $C$ is the differential graded associative (dga) algebra $(\Omega C,d_{\Omega C})$ defined as follows.
For any $\bar c \in \overline{C}$ write $\Delta(\bar c)= \sum \bar c' \otimes \bar c''$ for the induced coproduct on $\overline{C}$. The underlying algebra of the cobar construction is the tensor algebra \[\Omega C= Ts^{-1}\overline{C}= \Bbbk \oplus s^{-1}\overline{C} \oplus \left(s^{-1} \overline{C}\, \right)^{\otimes 2} \oplus \left(s^{-1} \overline{C}\, \right)^{\otimes 3} \cdots \] and the differential $d_{\Omega C}$ is defined by extending
\[
d_{\Omega C}([\bar c]) =-\left[\, \overline{d_{C}  (c)} \,\right] + \sum (-1)^{|c'|} \left[\, \bar{ c'} \mid \bar {c''} \, \right]
\]
 as a derivation to all of $\Omega C,$ where  an element $s^{-1}\bar c_1 \otimes ... \otimes s^{-1}\bar c_n \in \Omega C$ is denoted by $[\bar c_1|...|\bar c_n].$ The cobar construction defines a functor from the category of coaugmented dgc coalgebras to the category of augmented dga algebras.

The \textit{coHochschild complex} of $C$, as defined in \cite{Hess} and \cite{HPS}, is the dg $\Bbbk$-module $\Lambda C= (C \otimes \Omega C, d_{\Lambda C})$ with differential $d_{\Lambda C}= d_C \otimes 1 + 1 \otimes d_{\Omega C} + \theta_1 + \theta_2$ where

\begin{equation}
\begin{array}{llll}
\theta _1 (v\otimes [ \bar c_1|\dotsb | \bar c_n]) = -\sum (-1)^{|v'|}\,v'\otimes
 [\bar{ v''}| \bar c_1|\!\dotsb\! |\bar c_n],
 \newline $\vspace{1mm}$
 \\
\theta _2 (v\otimes [ \bar c_1|\dotsb | \bar c_n]) = \sum (-1)^{(| v'|+1)
 (|{v''}|+\epsilon^c_n)}\, v''\otimes [ \bar c_1|\!\dotsb \!| \bar c_{n}| \bar{v'}],
\newline $\vspace{1mm}$\\
\hspace{2.8in}
\epsilon^x_n=|x_1|+\cdots +|x_n|+n.
\end{array}
\end{equation}

Adams showed in \cite{Adams} that the cobar construction on the reduced dgc coalgebra of singular chains on a simply connected topological space is quasi-isomorphic as a dga algebra to the singular chains on the based loop space.
It was then explained in \cite{Rivera- Zeinalian} and \cite{RS} why the same construction works for a path connected, possibly non-simply connected, space. The cobar construction is not invariant under quasi-isomorphisms of dgc coalgebras so in order to obtain the correct model for the chains on the based loop space one must be careful when choosing the dgc coalgebra model for the underlying space.  Moreover, it was explained in \cite{RS} how for any path connected simplicial set $X$, the chains on the necklical set $\widehat{\mathbf{\Omega}}X$ may be understood as a model generalizing Adams' cobar construction on the dgc coalgebra of simplicial chains on $X$, similar to the \textit{extended cobar construction} of \cite{Hess- Tonks}. In \cite{HPS} and \cite{saneFREE} it was shown that the coHochschild complex of the dg coalgebra of chains on a simply connected simplicial set $X$ calculates the homology of the free loop space on $|X|$. Below, we generalize this fact: we explain how the chains associated to the closed necklical set $\widehat{\mathbf{\Lambda}}X$ yields a complex which generalizes the coHochschild complex on the simplicial chains of $X$ and calculates the homology of the free loop space on $|X|$ for a path connected, possibly non-simply connected, simplicial set $X$. The resulting chain complex is small and suitable for computations.

\subsection{The hat-coHochschild construction} Let $(X,x_0)$ be a pointed simplicial set and denote by $(C_\ast(X),d_{C},\Delta)$  the simplicial chain dgc coalgebra with Alexander-Whitney coproduct. The \textit{hat-coHochschild} complex $\widehat{\Lambda} C_\ast(X)$ of $C_\ast(X)$ is in fact obtained from  the coHochschild complex $\Lambda C_\ast(X)$
 by replacing the cobar construction $\Omega C_\ast(X)$ by the hat-cobar construction $\widehat{\Omega} C_\ast(X).$

 First, recall the definition of the \textit{hat-cobar construction}. Consider coaugmented the dgc coalgebra $(C_\ast (Z(X)), d_C,\Delta, \epsilon)$, where $\epsilon$ is determined by the choice of fixed point $x_0$. Let $C_{\ast > 0} (X_0)$ denote the sub dgc coalgebra of $C_\ast (Z(X))$ generated by simplices of positive degree on the subsimplicial set of $Z(X)$ generated by the vertices $X_0=Z(X)_0$, so all generators of $C_{\ast > 0} (X_0)$ are degenerate simplices having degenerate faces.
We may truncate $d_C$ and $\Delta$ to obtain a new coaugmented dg coalgebra $A:=(A_\ast(X), d_A,\Delta', \epsilon)$  where
 \[ A_\ast(X)=C_\ast (Z(X)) / C_{\ast > 0} (X_0), \,\,
d_A=d_C-\partial_0-(-1)^{n}\partial_n: A_n\rightarrow  A_{n-1},\]
and $\Delta'$ is $\Delta$ without the primitive  term.
Let $(\Omega A,d_{\Omega A})$ be the cobar construction of $A$ and
define for $n>0$, $\Omega'_n A\subset \Omega _nA$
to be the submodule generated by monomials $[\bar a_1|\cdots| \bar a_k]\in \Omega_n A,\,k\geq 1,  $   where each $a_i$ is a simplex in $Z(X)$ representing a generator of $A$ such that
 $\min a_1=\max a_k= x_0 $ and
 $\max a_i=\min a_{i+1}$
for all $i;$ $\Omega^{\prime}_0 A=\Bbbk$, and $\Omega^{\prime}_n A =0 $ for $n<0$. Then $\Omega'A$ inherits the structure of a dg algebra. In particular, $\Omega' A= \Omega A$ when $X_0=\{x_0\}$.
 Define the \emph{hat-cobar construction}  $\widehat{\Omega}C_\ast(X)$ of the dgc coalgebra $C_\ast(X)$ as
\[  \widehat{\Omega}C_\ast(X)=\Omega' A /\sim ,  \]
where $\sim$ is generated by
\[
[\bar a_1|...|\bar a_{i-1}|\bar a_i|\bar a_{i+1}|\bar a_{i+2}|...|\bar a_k ]\sim [\bar a_1|...|\bar a_{i-1}|\bar a_{i+2}|...|\bar a_k]
\ \  \text{whenever}\ \  a_{i+1}=a_i^{op};
\]
in particular, $[\,\bar a_i|\bar a_{i+1}]\sim 1_{\Bbbk}.$ Note that the dg algebra of chains on the necklical set $\widehat{\mathbf{\Omega}} X$    coincides with the hat-cobar construction $\widehat{\Omega}C_\ast(X).$

The \textit{hat-coHochschild complex} is defined as $\widehat{\Lambda} C_\ast(X)=C_\ast(X)\otimes \widehat{\Omega} C_\ast(X)$ with  differential
        $d_{\widehat{\Lambda}C}= d_C\otimes 1 + 1\otimes d_{ \widehat{\Omega} C}+ \theta _1+\theta _2,$
 where $\theta_1$ and $\theta_2$ are defined as in 4.1.

  The homology of  $\widehat{\Lambda} C_\ast(X) $ is called the \textit{hat-coHochschild homology}
 of
$ C_\ast(X)$ and is denoted by $\widehat{HH}_*( C_\ast(X)).$

The chain complex $(C_\ast(\widehat{\mathbf{\Lambda}}X),d)$ of  the closed necklical set $\widehat{\mathbf{\Lambda}} X$ is

\[C_\ast(\widehat{\mathbf{\Lambda}} X)=C'_\ast(\widehat{\mathbf{\Lambda}} X)/C'_\ast(D(e)),\]
where $ C'_\ast(\widehat{\mathbf{\Lambda}} X)$ is
the free $\Bbbk$-module  generated by
the set
$\widehat{\mathbf{\Lambda}} X$
and
$D(e)\subset \widehat{\mathbf{\Omega}} X \subset \widehat{\mathbf{\Lambda}} X$ denotes the set  of degeneracies arising from the unit $e\in \widehat{\mathbf{\Omega}} X,$
and the differential $d=\{d_n\}_{n\geq 1}$ with
$d_n:   C_n(\widehat{\mathbf{\Lambda}}X)  \rightarrow  C_{n-1}(\widehat{\mathbf{\Lambda}}X)    $
given by
 $d_n=\bigoplus_{\substack{{n=n_0+r}\\{n_0,r\geq 0}}}\, d_{n_0,r}$ so that the component
\[  d_{n_0,r}= \sum _{i=1}^{n} (-1)^{i}(d_i^0-d_i^1)+\sum _{i=2}^{n_0} (-1)^{(i-1)n} d^2_i
   \]
acts on $   C_\ast(\widehat{\mathbf{\Lambda}}_{n_0,r}X) .$
 We have a straightforward
\begin{theorem}\label{hat-coHoch} For a simplicial set $X$
the chain complex  $C_\ast(\widehat{\mathbf{\Lambda}} X)$    coincides with the hat-coHochschild complex $\widehat{\Lambda}C_\ast(X).$
\end{theorem}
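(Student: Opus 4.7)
The plan is to identify $C_\ast(\widehat{\mathbf{\Lambda}}X)$ with $\widehat{\Lambda}C_\ast(X)$ in two stages: first at the level of graded $\Bbbk$-modules, and then by matching each component of the two differentials term by term.

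For the module-level isomorphism, I would invoke the explicit description (3.5) of $\widehat{\mathbf{\Lambda}}_n X$ as equivalence classes $(x,y)$ with $x\in X_p$, $y\in \widehat{\mathbf{\Omega}}''_q X$, $p+q=n$, subject to the matching conditions $\min x=\max\sigma_k$, $\max x=\min\sigma_1$ and the degeneracy identification $(s_p(x),y)\sim (x,\eta_1(y))$. Quotienting the free module on this set by the submodule of degenerate elements $D(e)$ gives, on generators, exactly the tensors $x\otimes [\bar\sigma_1|\cdots|\bar\sigma_k]$ arising in $C_\ast(X)\otimes\widehat{\Omega}C_\ast(X)$ once we use the identification $C_\ast(\widehat{\mathbf{\Omega}}X)=\widehat{\Omega}C_\ast(X)$ already recorded in the paper. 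The matching conditions on $(x,y)$ reproduce the condition that the monomial in $\widehat{\Omega}C_\ast(X)$ start and end at the right vertices of $x$, which corresponds to the nontrivial part of the tensor structure. Thus we obtain a natural graded module isomorphism $\Phi:C_\ast(\widehat{\mathbf{\Lambda}}X)\xrightarrow{\cong} \widehat{\Lambda}C_\ast(X)$.

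Next I would split the differential $d_{n_0,r}$ into four groups according to the ranges of the indices in the explicit formulas for $d^0_i, d^1_i, d^2_i$ given in section 2.5, and match each group to one of the four summands of $d_{\widehat{\Lambda}C}=d_C\otimes 1 +1\otimes d_{\widehat{\Omega}C}+\theta_1+\theta_2$. Concretely: the terms $(-1)^i d^1_i$ with $1\le i\le n_0$ act as $\partial_{i-1}$ on $x$, and after the index shift give the Alexander-Whitney simplicial differential $d_C\otimes 1$; the terms $(-1)^i(d^0_i-d^1_i)$ with $n_0<i\le n$ act solely on $y$ and reproduce $1\otimes d_{\widehat{\Omega}C}$ upon recognizing the necklical differential on $\widehat{\mathbf{\Omega}}X$ (this part is literally the computation from \cite{RS} recording $C_\ast(\widehat{\mathbf{\Omega}}X)=\widehat{\Omega}C_\ast(X)$); the terms $(-1)^i d^0_i$ for $1\le i\le n_0$ prepend the front Alexander-Whitney factor $\overline{\partial_0\cdots\partial_{i-2}(x)}$ as a new bead of $y$ while replacing $x$ by its back AW factor $\partial_i\cdots\partial_{p}(x)$, which is precisely $\theta_1$; and the terms $(-1)^{(i-1)n} d^2_i$ for $1\le i\le n_0$ append $\overline{\partial_{i+1}\cdots\partial_p(x)}$ as a last bead of $y$ while replacing $x$ by $\partial_0\cdots\partial_{i-1}(x)$, which is $\theta_2$.

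The main technical obstacle, and the only real content, is verifying that the sign conventions agree after the index shift. In particular I need to check (i) that the alternating sum $\sum_{i=1}^{n_0}(-1)^i\partial_{i-1}$ matches $d_C$ on $C_\ast(X)$, (ii) that the mixed sign $(-1)^i$ on $d^0_i$ for $1\le i\le n_0$ produces the factor $-(-1)^{|v'|}$ of $\theta_1$ once one records that $|v'|=i-1$ with the AW splitting, and (iii) that the sign $(-1)^{(i-1)n}$ attached to $d^2_i$ matches $(-1)^{(|v'|+1)(|v''|+\epsilon^c_n)}$ of $\theta_2$, again after translating $|v'|$, $|v''|$, and $\epsilon^c_n$ into the tridegree $(n_0,r,k)$. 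Once these sign identifications are recorded, $\Phi$ commutes with the differentials and is therefore an isomorphism of chain complexes.
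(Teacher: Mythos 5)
The paper gives no proof (the theorem is asserted as ``straightforward''), so you are supplying one, and your overall plan — identify the underlying graded modules via (3.5), then match the face operators against the four summands of $d_{\widehat{\Lambda}C}$ — is the right plan. However, the dictionary you propose is incorrect, and the sign/range check that you defer would fail. The main problem is your claim that $(-1)^{i}d^{1}_{i}$ for $1\le i\le n_0$ produces $d_C\otimes 1$. Even setting aside the sign, the index $i-1$ only ranges over $0,\ldots,n_0-1$, so $\partial_{n_0}$ never appears; more importantly, the paper sets $\delta^{1}_{1}:=\delta^{1}_{2}$, i.e.\ $d^{1}_{1}=d^{2}_{1}$, so the $i=1$ summand is $(\partial_0 x,\,y\cdot\overline{x_{|[0,1]}})$: it adjoins a degree-zero bead to $y$ and is an extremal Alexander--Whitney term of $\theta_2$, not a summand of $d_C\otimes 1$. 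Once the convention is accounted for, the genuine $d^1$-terms produce only the interior alternating sum $\sum_{j=1}^{n_0-1}(-1)^{j}\partial_{j}$, i.e.\ the truncated differential $d_A$ of \S 4.2. The outer faces $\partial_0 x$ and $\partial_{n_0}x$ are instead produced by $d^2_1$ and $d^0_{n_0}$, each together with the complementary one-dimensional AW factor adjoined as a bead of $y$; these are precisely the two extremal terms of $\theta_2$ and $\theta_1$. So your decomposition double-counts: all $n_0$ of the $d^0_i$ account for $\theta_1$, all $n_0$ of the $d^2_i$ (including the one sitting in the $d^1_1$ slot) account for $\theta_2$, and nothing in the closed-necklical differential yields $\partial_0 x\otimes y$ or $(-1)^{n_0}\partial_{n_0}x\otimes y$ as separate terms.

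Two further points to fix. You have the Alexander--Whitney factors reversed in the $\theta_1$ description: $\partial_i\cdots\partial_{p}(x)=x_{|[0,i-1]}$ is the \emph{front} factor (it remains in the $C_\ast(X)$ slot), while $\overline{\partial_0\cdots\partial_{i-2}(x)}=\overline{x_{|[i-1,p]}}$ is the \emph{back} factor (it is prepended to $y$). And at the module level: in (3.5) the second coordinate $y$ lies in $\widehat{\mathbf{\Omega}}''X$ with endpoints constrained only to match $x$, whereas $\widehat{\Omega}C_\ast(X)$ is built from $\Omega'A$ and requires $\min a_1=\max a_k=x_0$. The assignment $(x,y)\mapsto x\otimes y$ therefore does not obviously land in $C_\ast(X)\otimes\widehat{\Omega}C_\ast(X)$ unless $X$ has a single vertex; this needs to be addressed before the differential comparison can even be set up.
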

In particular,
for a $1$-reduced $X$ (e.g., $X=\operatorname{Sing}^1(Y,y)$ the simplicial set consisting of all singular simplices in a topological space $Y$ which collapse edges to a fixed point $y \in Y$), the hat-cobar construction $\widehat{\Omega} C_\ast(X)$ coincides with the Adams' cobar construction  $\Omega C_\ast(X)$ of  the dg coalgebra $C_\ast(X)$  and, consequently, the hat-coHochschild construction $\widehat{\Lambda} C_\ast(X)$ coincides with the standard coHochschild construction  $\Lambda C_\ast(X).$
Thus  we obtain (compare \cite{saneFREE})
\begin{theorem}\label{coHoch} For a 1-reduced simplicial set $X$
the chain complex  $C_\ast({\mathbf{\Lambda}} X)$    coincides with the coHochschild complex ${\Lambda}C_\ast(X).$
\end{theorem}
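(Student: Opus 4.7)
The plan is to derive Theorem \ref{coHoch} as a direct specialization of Theorem \ref{hat-coHoch} to the 1-reduced case, using that the formal inversion of 1-simplices and the associated ``hat'' decorations on both the geometric and algebraic sides become vacuous under the 1-reduced hypothesis. So the strategy is to compare the two sides of Theorem \ref{hat-coHoch} term-by-term with the two sides of the desired statement, and verify that each pair coincides under the additional assumption on $X$.

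First I would observe that for a 1-reduced simplicial set $X$, the only vertex is $x_0$ and the only 1-simplex is the degenerate $s_0(x_0)$. In particular, there are no non-degenerate 1-simplices, so $X_1^{op}=\emptyset$ and $Z(X)=X$. Consequently, every clause in the equivalence relation defining $\widehat{\mathbf{\Lambda}}X=\mathbf{\Lambda}(Z(X))/\!\sim$ that pairs up $x$ with $x^{op}$ is vacuous, and the explicit description in Section 3.5 forces $\widehat{\mathbf{\Lambda}}X=\mathbf{\Lambda}(X)$ as closed necklical sets. This immediately yields $C_\ast(\widehat{\mathbf{\Lambda}}X)=C_\ast(\mathbf{\Lambda} X)$ as chain complexes.

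Second, on the algebraic side, I would unwind the definition of $\widehat{\Omega}C_\ast(X)$. Since $X_0=\{x_0\}$, the conditions $\min a_1=\max a_k=x_0$ and $\max a_i=\min a_{i+1}$ are automatic on every monomial, so $\Omega'A=\Omega A$; the truncation $A_\ast(X)=C_\ast(X)/C_{\ast>0}(X_0)$ removes only the degenerate basepoint simplices, which do not affect the reduced cobar construction; and the collapse relation $[\bar a_i|\bar a_{i+1}]\sim 1_\Bbbk$ is never triggered because there are no non-degenerate $1$-simplices to label an $a_i^{op}$. Hence $\widehat{\Omega}C_\ast(X)=\Omega C_\ast(X)$ as dg algebras, and since $\widehat{\Lambda}C_\ast(X)=C_\ast(X)\otimes\widehat{\Omega}C_\ast(X)$ carries the same differential formula $d_C\otimes 1+1\otimes d+\theta_1+\theta_2$ as $\Lambda C_\ast(X)=C_\ast(X)\otimes\Omega C_\ast(X)$, the two coHochschild complexes coincide.

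Finally I would chain the identifications: applying Theorem \ref{hat-coHoch} to $X$ and combining with the two observations above gives
\[
C_\ast(\mathbf{\Lambda} X)=C_\ast(\widehat{\mathbf{\Lambda}}X)=\widehat{\Lambda}C_\ast(X)=\Lambda C_\ast(X),
\]
which is the required equality of chain complexes. No step here is more than a routine inspection; the main conceptual point, and the one I would take care to articulate, is that 1-reducedness is precisely the hypothesis under which the ``hat'' modifications on both the combinatorial model $\widehat{\mathbf{\Lambda}}X$ and the algebraic construction $\widehat{\Lambda}C_\ast(X)$ collapse to their classical counterparts in a compatible way, so that the identification of Theorem \ref{hat-coHoch} restricts to the identification claimed here.
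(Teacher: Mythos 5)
Your proof is correct and follows the same route as the paper: specialize Theorem~\ref{hat-coHoch} to the 1-reduced case, where $Z(X)=X$ forces $\widehat{\mathbf{\Lambda}}X=\mathbf{\Lambda}X$ and $\widehat{\Omega}C_\ast(X)=\Omega C_\ast(X)$, hence $\widehat{\Lambda}C_\ast(X)=\Lambda C_\ast(X)$. You make explicit the identification $C_\ast(\widehat{\mathbf{\Lambda}}X)=C_\ast(\mathbf{\Lambda}X)$, which the paper leaves implicit; otherwise this is the same argument.
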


It follows directly from Theorems \ref{freeloopmodel} and \ref{hat-coHoch}  that for a path connected simplicial set $X$ we have an isomorphism  $\widehat{HH}_*( C_\ast(X)) \cong H_*(\Lambda Y)$ for $Y=|X|$. Moreover,  from the homotopy invariance of the free loop space we have the following direct
\begin{corollary}
If $f_\ast:C_\ast(X)\rightarrow C_\ast(X')$ is induced by a weak equivalence $f: X\rightarrow X',$
then $\widehat{\Lambda}f_\ast:\widehat{\Lambda}C_\ast(X) \rightarrow \widehat{\Lambda}C_\ast(X') $ is a quasi-isomorphism.
\end{corollary}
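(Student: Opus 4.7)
The plan is to deduce the corollary from the three main results already in place: Theorem \ref{freeloopmodel}, which identifies $|\widehat{\mathbf{\Lambda}}X|$ with $\Lambda|X|$ up to homotopy; Theorem \ref{hat-coHoch}, which identifies the complex $\widehat{\Lambda}C_\ast(X)$ with the cellular chain complex $C_\ast(\widehat{\mathbf{\Lambda}}X)$; and the homotopy invariance of the free loop space functor on spaces of the homotopy type of a CW complex.

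First I would note that all three constructions $X\mapsto \widehat{\mathbf{\Lambda}}X$, $X\mapsto \widehat{\Lambda}C_\ast(X)$, and $X\mapsto \Lambda|X|$ are (covariantly) functorial in $X$, and that the identifications in Theorem \ref{hat-coHoch} and the homotopy equivalence $\Upsilon$ in Theorem \ref{freeloopmodel} are natural with respect to maps of pointed simplicial sets. In particular, for any map $f:X\to X'$ the square
\[
\begin{array}{ccc}
|\widehat{\mathbf{\Lambda}}X| & \xrightarrow{\Upsilon} & \Lambda|X| \\
\scriptstyle{|\widehat{\mathbf{\Lambda}}f|}\big\downarrow & & \big\downarrow\scriptstyle{\Lambda|f|}\\
|\widehat{\mathbf{\Lambda}}X'| & \xrightarrow{\Upsilon} & \Lambda|X'|
\end{array}
\]
commutes, since $\Upsilon$ is defined cell-by-cell from the canonical projection $\varphi$, the cubical coordinates, and the simplicial map $f$, all of which are natural in $X$.

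Second I would invoke the hypothesis: if $f$ is a weak equivalence of simplicial sets, then $|f|:|X|\to|X'|$ is a weak homotopy equivalence of CW complexes and hence, by Whitehead's theorem, a homotopy equivalence. The functor $\Lambda=\mathrm{Maps}(S^1,-)$ preserves homotopy equivalences (a homotopy between maps of the target induces a homotopy between the post-composition maps on free loop spaces), so $\Lambda|f|$ is a homotopy equivalence as well. Combining this with the fact that the horizontal maps in the diagram above are homotopy equivalences (Theorem \ref{freeloopmodel}), the left vertical map $|\widehat{\mathbf{\Lambda}}f|$ is a homotopy equivalence, and in particular induces an isomorphism on singular homology.

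Finally, since the cellular chain complex $C_\ast(\widehat{\mathbf{\Lambda}}X)$ computes $H_\ast(|\widehat{\mathbf{\Lambda}}X|)$ and the assignment $X\mapsto C_\ast(\widehat{\mathbf{\Lambda}}X)$ is functorial on simplicial sets, the chain map induced by $f$ is a quasi-isomorphism. Under the isomorphism of complexes $C_\ast(\widehat{\mathbf{\Lambda}}X)\cong \widehat{\Lambda}C_\ast(X)$ of Theorem \ref{hat-coHoch}, which is itself natural in $X$ (both sides are built functorially from the simplices of $X$ and the same Alexander--Whitney style combinatorics), this chain map is precisely $\widehat{\Lambda}f_\ast$, and the corollary follows. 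The only non-routine point is the verification of naturality of $\Upsilon$ and of the identification in Theorem \ref{hat-coHoch}; both are essentially formal once one unpacks the cellular definitions, since every ingredient (the polytopes $F_n$, the projection $\varphi$, the cubical coordinates, and the concatenation $\beta\ast\lambda\ast\alpha$) is constructed independently of $X$ and only post-composed with $|f|$.
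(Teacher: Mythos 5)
Your proposal is correct and follows essentially the same route the paper takes: combine Theorem \ref{freeloopmodel} (the homotopy equivalence $\Upsilon: |\widehat{\mathbf{\Lambda}}X|\to\Lambda|X|$), Theorem \ref{hat-coHoch} (the identification $C_\ast(\widehat{\mathbf{\Lambda}}X)\cong\widehat{\Lambda}C_\ast(X)$), and the homotopy invariance of the free loop space functor. The only thing you add is the explicit observation that both identifications are natural in $X$, which is needed to conclude that the specific map $\widehat{\Lambda}f_\ast$ (and not merely some abstract isomorphism of homology groups) is a quasi-isomorphism; the paper leaves this naturality implicit but your spelling it out is the right thing to check and your reasons are sound.
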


\subsection{Hochschild chain models for the free loop space}
Recall that for any augmented dga algebra $(A,d_A, \cdot, \mu: A \to \Bbbk)$ over $\Bbbk$, the \textit{Hochschild complex} of $A$ is defined as the dg module $Hoch(A)= (BA \otimes A, d_{Hoch}=1 \otimes d_{A} + d_{BA} \otimes 1 + \theta^1+\theta^2 )$
where
\[BA:= T^csA= \Bbbk \oplus \left(s \overline{A}\,\right) \oplus \left(s \overline{A}\,\right)^{\otimes 2} \oplus \left(s \overline{A}\,\right)^{\otimes 3} \oplus \cdots ,
\]
$\overline{A}= \text{ker}(\mu: A \to \Bbbk)$, $s$ denotes the shift by $+1$ functor, and $d_{BA}=d_1+d_2$ denotes the \textit{bar construction differential} with
\[\ \  d_{1}[\,\bar a_{1}|\dotsb|\bar a_{n}]=-\sum_{i=1}^{n} (-1)^{\epsilon^a_{i-1}}
[\,\bar a_{1}|\dotsb|\overline{d_{A}(a_{i})}\,|\dotsb|\bar a_{n}],\]
\[d_{2} [\,\bar a_{1}|\dotsb|\bar a_{n}]= -\sum_{i=1}^{n-1} (-1)^{\epsilon^a_{i}}
[\,\bar a_{1}|\dotsb|\overline{a_{i}a_{i+1}}\,|\dotsb|\bar a_{n}],
\]
and
\begin{equation*}
\begin{array}{lll}
\theta ^1([\bar a_1|\dotsb |\bar a_n]\otimes u) = (-1)^{|a_1|(|u|+\epsilon^a_n+|a_1|+1)} [\bar a_2|\dotsb |\bar a_n]\otimes ua_1,\newline $\vspace{1mm}$ \\

\theta ^2 ( [\bar a_1|\dotsb |\bar a_n]\otimes u) =
-(-1)^{\epsilon^a_{n-1}}  [\bar a_1|\dotsb |\bar a_{n-1}]\otimes a_n u.
\end{array}
 \end{equation*}
 where an element $s \bar a_1 \otimes \dotsb \otimes s\bar a_n \in BA$ is denoted by  $[\bar a_1 | \dotsb | \bar a_n]$.
Recall that $(BA, d_{BA})$ is a dgc coalgebra equipped with deconcatenation coproduct.

The following result was stated in \cite{Hess}, however, we outline a proof  not relying on the comparison theorem for spectral sequences of twisted tensor products, which assumes certain hypotheses.

\begin{proposition} Let $C$ be a connected dgc coalgebra. Then there is a quasi-isomorphism of dg modules $coHoch(C) \simeq Hoch(\Omega C)$.
\end{proposition}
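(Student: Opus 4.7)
The plan is to construct an explicit chain map
\[
\Phi\colon \Lambda C \longrightarrow Hoch(\Omega C)
\]
from the classical unit of the bar-cobar adjunction and then prove it is a quasi-isomorphism by a filtration argument tailored to avoid the hypotheses required by the usual twisted-tensor-product comparison theorem.

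\emph{Construction and chain map property.} Recall the natural map of dg coalgebras $\eta_C\colon C \to B\Omega C$, the unit of the bar-cobar adjunction, which is a quasi-isomorphism whenever $C$ is connected (a classical theorem). Set $\Phi := \eta_C \otimes 1_{\Omega C}$. That $\Phi$ commutes with the product parts $d_C\otimes 1$ and $1\otimes d_{\Omega C}$ of the differential is immediate since $\eta_C$ is a chain map. That $\Phi$ commutes with the twisting parts follows from the observation that both $\theta_1 + \theta_2$ on $\Lambda C$ and $\theta^1 + \theta^2$ on $Hoch(\Omega C)$ are extracted from the canonical twisting cochain $\tau\colon C \to \Omega C$, $\tau(c) = [\bar c]$, applied to the left and right of the cobar factor; the identity $(\theta^1 + \theta^2)\circ(\eta_C\otimes 1) = (\eta_C\otimes 1)\circ(\theta_1 + \theta_2)$ then follows by direct expansion using coassociativity in $C$ together with the defining property of $\eta_C$ as the universal coalgebra lift of $\tau$.

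\emph{Filtration and $E^1$-page.} Filter both complexes decreasingly by the length of the $\Omega C$-factor: $F^p \Lambda C := C\otimes \Omega^{\geq p} C$ and $F^p Hoch(\Omega C) := B\Omega C \otimes \Omega^{\geq p} C$, where $\Omega^{\geq p} C$ is spanned by cobar words of length at least $p$. A direct inspection shows that on both complexes, $d_C\otimes 1$ (resp.\ $d_{B\Omega C}\otimes 1$) together with the letter-internal piece of $1\otimes d_{\Omega C}$ preserve the filtration, while the coproduct piece of $1\otimes d_{\Omega C}$ and both $\theta$'s strictly raise it. The associated graded at level $p$ is
\[
gr^p \Lambda C = C \otimes T^p(s^{-1}\overline C) \quad\text{and}\quad gr^p Hoch(\Omega C) = B\Omega C \otimes T^p(s^{-1}\overline C),
\]
each equipped with the tensor-product differential, and $gr^p\Phi = \eta_C \otimes 1$ is a quasi-isomorphism by the K\"unneth formula, since $\eta_C$ is a quasi-isomorphism and $T^p(s^{-1}\overline C)$ is free in each degree.

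\emph{Convergence, the main obstacle.} The cobar-length filtration is bounded below but, for a merely connected (not necessarily simply connected) $C$, it is not bounded above in a fixed total degree, because $\overline C_1 \neq 0$ produces generators of $s^{-1}\overline C$ in degree $0$. This is precisely the point at which standard twisted-tensor-product comparison theorems require extra hypotheses. To circumvent this, refine the cobar-length filtration using the coradical filtration $F^{\mathrm{cor}}_\bullet C$: since $C$ is conilpotent, $F^{\mathrm{cor}}_\bullet C$ is exhaustive with $F^{\mathrm{cor}}_0 C = \Bbbk$. Assigning to each cobar word $[\bar c_1|\cdots|\bar c_k]$ the sum of the coradical degrees of its letters yields an exhaustive filtration on $\Omega C$, hence on $\Lambda C$ and on $Hoch(\Omega C)$, which is finite in each total degree. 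Combining the two filtrations lexicographically produces a single bounded-below, exhaustive, degreewise-finite filtration preserved by $\Phi$. Both spectral sequences are therefore strongly convergent, the induced map on the $E^1$-page is still $\eta_C \otimes 1$ after the cobar-length analysis above, and the usual comparison theorem in the strongly convergent setting forces $\Phi$ to be a quasi-isomorphism.
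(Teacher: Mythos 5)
Your map $\Phi = \eta_C \otimes 1_{\Omega C}$ is fine as a chain map and points in the opposite direction to the paper's map $\phi: Hoch(\Omega C) \to coHoch(C)$; indeed, the paper's Remark~3 identifies $\eta \otimes id$ as a chain homotopy inverse to $\phi$. Your direction is in some ways cleaner to set up, since $\eta_C$ is an honest dgc coalgebra morphism and so $\eta_C \otimes 1$ is immediately a chain map, whereas $\rho$ is not a coalgebra map and forces the correction term $\chi$ in the paper's $\phi$. The real divergence, and the gap, lies in the proof that the map is a quasi-isomorphism.

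Your convergence argument does not work. You correctly note that the cobar-length filtration is unbounded in a fixed total degree when $\overline{C}_1 \neq 0$, but the proposed fix fails for the same reason: the coradical-degree-sum filtration on $\Omega C$ is \emph{also} unbounded in each total degree. A generator $c \in \overline{C}_1$ has coradical degree exactly $1$ (its reduced coproduct vanishes for degree reasons), so the cobar word with $k$ copies of $\bar{c}$ lies in total degree $0$ and has coradical-sum degree $k$, for every $k$. Hence the claim that the refined filtration is ``finite in each total degree'' is false, and the ``lexicographic combination'' (which is not a $\mathbb{Z}$-indexed filtration of the kind that produces a spectral sequence) does not restore strong convergence. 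This is precisely the obstruction the paper is designed to sidestep: it explicitly gives a proof ``not relying on the comparison theorem for spectral sequences of twisted tensor products, which assumes certain hypotheses.'' The paper instead argues at the chain level: $\phi$ is a surjective chain map, and $\text{Ker}\,\phi$ is shown to be acyclic via a \emph{locally nilpotent} contraction $s'$ satisfying $(s'd_{Hoch}+d_{Hoch}s'-id)^{n_y}(y)=0$ for some $n_y$ depending on $y$, with local nilpotency supplied by conilpotency of $\Delta$ on $\overline{C}$. Acyclicity of the kernel then yields the result from the long exact sequence of the short exact sequence, with no convergence hypotheses needed. That chain-level device is the idea your argument is missing.
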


\begin{proof}
Consider the canonical projection map $\rho:B\Omega C\to C$. The map $\rho$ is not a coalgebra map; however $\rho$ induces a chain map \[\phi: Hoch(\Omega C) \to coHoch(C)\] defined as follows.
For any $x\otimes u\in B\Omega C \otimes \Omega C $  define $\phi$ by (compare  \cite{J-M.hoh})
\[\phi(x\otimes u)=\left\{
\begin{array}{llll}
1\otimes u,  && x= [\  ],\\

( s\otimes 1 )\chi(a\otimes  u), && x =[\bar a],\\

0, &&  x= [\bar a_1|\cdots |\bar a_n],& n\geq 2,
\end{array}
\right.
\]
where $\chi:\Omega C \otimes \Omega C\rightarrow  s^{-1}\overline{C}\otimes \Omega C$
is a map given for $a\otimes u\in \Omega C\otimes \Omega C$ with  $a=\bar a_1\cdots \bar a_n,\,a_i\in C,$     by
\[
\chi(a\otimes u)=\left\{
\begin{array}{llll}
         0,            &&  a= 1,\\

\bar a_1\otimes u,      && n=1,\vspace{1mm}\\

\underset{1\leq i\leq n}{\sum}(-1)^{\varepsilon}
 \bar a_i \otimes \bar a_{i+1}\cdots \bar a_{n}\, u\, \bar a _1\cdots \bar a_{i-1},& &    n\geq 2,
\end{array}
\right.
\]
$\hspace{1.8in} \varepsilon=(|a_{i-1}|+\cdots +|a_n|+n+i)(|u|+|a_1|+\cdots +|a_i|+i).$
The above construction is part of the following general fact shown in \cite{Hess}, which we do not need in full generality for our purposes:  $\rho$ is actually a morphism of coalgebras up to strong homotopy compatible with twisting cochains, and these kind of morphisms induce chain maps between the more general notion of Hochschild complexes associated to a twisting cochain.

There is a ``local" chain contraction $s: \text{Ker} \rho \rightarrow \text{Ker} \rho $ defined for $x:=[\bar x_1|...|\bar x_n]\in B\Omega C$ with $x_1=\bar c_1\cdots \bar c_k\in \Omega C$ by
\[
s(x)=\left\{
  \begin{array}{lll}
    0,                                              &  k=1,\\
    \left[\,\bar {\bar {c}}_1|\, \overline{\bar{c}_2\cdots \bar{c}_k}\,| \bar{x}_2|...|\bar{x}_n\right], & k>1;
  \end{array}
\right.
\]
this means that
for an element $a\in (\text{Ker} \rho,d_{\Omega BC})$   there exists a positive integer $m_a$ such that   $m_a^{th}$-composition   $(sd+ds-id)^{m_a}(a)=0.$ This follows from the conilpotency of the coproduct $\Delta$ on $\overline{C}$, namely, from the fact that for any $\bar c \in \overline{C} $, $\Delta^{k_c}(\bar c)=0$ for some positive integer $k_c$. Extend $s$ on  $\text{Ker} \phi$ as $s'=s\otimes id.$ Then $s'$ will similarly satisfy $(s'd_{Hoch} + d_{Hoch}s' -id )^{n_y}(y)=0$ for some positive integer $n_y$. It follows that every cycle in $\text{Ker} \phi$ bounds, so the complex $(\text{Ker} \phi, d_{Hoch})$ is acyclic.
Then the result follows from the long exact homology sequence of
$0\to \text{Ker} \phi\to B\Omega C \otimes \Omega C\overset{\phi}{\longrightarrow} C\otimes \Omega C\to 0$.
\end{proof}
\begin{remark}
\normalfont
In fact, the local contraction $s$ above may be used to obtain a chain homotopy between $\eta \circ \rho$ and $id_{B\Omega C}$ where $\eta: C \to B \Omega C$ is the (strict) dgc coalgebra map given by
\[\eta(x)= \{ [x] \} + \sum \{ [x'] | [x'']\} + \sum \{ [x'] | [x''] | [x'''] \} + \cdots .\]
Similarly, $s'$ may be used to obtain a chain homotopy between the chain maps $(\eta \otimes id_C) \circ \phi $ and $id_{Hoch(\Omega C)}.$
\end{remark}
For any space $Y$ denote $S^N_*(Y,y):= C^N_*(\text{Sing}(Y,y))$ the dgc coalgebra of normalized singular chains on $Y$ with vertices on $y \in Y$. It was shown in \cite{Rivera- Zeinalian} that for any path connected space $Y$ there is a quasi-isomorphism of dga algebras

\[ S^N_*(\Omega Y) \simeq \Omega S^N_*(Y,y).
\]

As explained in Remark 2 of \cite{RS}, there is no need to add formal inverses (as in 3.4) to obtain the correct dga algebra on the right hand side since $\text{Sing}(Y,y)$ is a Kan complex and every $1$-simplex is invertible up to homotopy.
In fact, the same exact argument as in the proof of Theorem \ref{freeloopmodel} follows through to show that $| \mathbf{\Lambda} \text{Sing} (Y,y) |$ is homotopy equivalent to $\Lambda Y$. Moreover, the normalized chain complex associated to the closed necklical set $\mathbf{\Lambda} \text{Sing}(Y,y)$ is isomorphic as a dg module to $coHoch(S^N_*(Y,y))$, the coHochschild complex on the dgc coalgebra $S^N_*(Y,y)$. It follows that $coHoch(S^N_*(Y,y))$ is a dg module quasi-isomorphic to the normalized singular chains on the free loop space $S^N_*(\Lambda Y)$. Hence, by Proposition 3, we obtain quasi-isomorphisms
\[ Hoch( \Omega S^N_*(Y,y) ) \simeq coHoch( S^N_*(Y,y) )  \simeq S_*(\Lambda Y).
\]
Finally, by the invariance of the Hochschild chain complex with respect to quasi-isomorphisms of dga algebras we may deduce the following classical result proved in \cite{Goodwillie} using different methods
\begin{corollary}
For any path connected topological space $Y$ there is a quasi-\linebreak
isomorphism of dg modules $Hoch(S^N_*(\Omega Y) ) \simeq S^N_*(\Lambda Y)$.
\end{corollary}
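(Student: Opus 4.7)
The plan is to assemble the corollary as the final link in a chain of quasi-isomorphisms that has essentially been built up in the paragraphs preceding it. The key ingredients available are: (a) the Rivera--Zeinalian quasi-isomorphism of dga algebras $S^N_*(\Omega Y) \simeq \Omega S^N_*(Y,y)$, (b) the identification of the normalized chains on the closed necklical set $\mathbf{\Lambda}\,\mathrm{Sing}(Y,y)$ with $coHoch(S^N_*(Y,y))$, together with the variant of Theorem~\ref{freeloopmodel} applied to the Kan simplicial set $\mathrm{Sing}(Y,y)$ (so that no formal inversion of $1$-simplices is needed), and (c) Proposition 3, which gives $coHoch(C) \simeq Hoch(\Omega C)$ for connected dgc coalgebras.

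First I would invoke (b) and the topological identification $|\mathbf{\Lambda}\,\mathrm{Sing}(Y,y)| \simeq \Lambda Y$ to write
\[
coHoch\bigl(S^N_*(Y,y)\bigr) \;\simeq\; S^N_*(\Lambda Y)
\]
as dg modules. Next I would apply Proposition 3 to the connected dgc coalgebra $C = S^N_*(Y,y)$ (choosing a basepoint $y \in Y$ gives the coaugmentation and path-connectedness of $Y$ gives connectivity of $C$), obtaining
\[
Hoch\bigl(\Omega S^N_*(Y,y)\bigr) \;\simeq\; coHoch\bigl(S^N_*(Y,y)\bigr).
\]
Composing these two equivalences yields $Hoch(\Omega S^N_*(Y,y)) \simeq S^N_*(\Lambda Y)$.

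To finish, I would apply the Rivera--Zeinalian quasi-isomorphism (a) together with the standard fact that $Hoch(-)$ is invariant under quasi-isomorphisms of dga algebras. Concretely, the zig-zag $S^N_*(\Omega Y) \simeq \Omega S^N_*(Y,y)$ of dga algebras induces a zig-zag
\[
Hoch\bigl(S^N_*(\Omega Y)\bigr) \;\simeq\; Hoch\bigl(\Omega S^N_*(Y,y)\bigr),
\]
and concatenation with the previous chain of quasi-isomorphisms delivers the desired $Hoch(S^N_*(\Omega Y)) \simeq S^N_*(\Lambda Y)$.

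The main obstacle is conceptual rather than technical: one must verify that the variant of Theorem~\ref{freeloopmodel} applied to $X = \mathrm{Sing}(Y,y)$ really does produce a chain model isomorphic to the ordinary (non-hat) coHochschild complex of $S^N_*(Y,y)$, which requires checking that the Kan condition on $\mathrm{Sing}(Y,y)$ lets one omit the formal inverses from \S3.4 without losing the correct homotopy type, as noted in Remark~2 of \cite{RS}. Once this identification is granted, all the quasi-isomorphisms in the chain are either part of the cited results or standard homological invariance statements, so no further calculation is needed.
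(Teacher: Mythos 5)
Your proposal is correct and follows essentially the same route as the paper: it assembles the quasi-isomorphism from (i) the Rivera--Zeinalian identification $S^N_*(\Omega Y)\simeq\Omega S^N_*(Y,y)$, (ii) the application of Theorem~3 to the Kan complex $\operatorname{Sing}(Y,y)$ (where no formal inversion is needed) to identify $coHoch(S^N_*(Y,y))$ with a chain model for $\Lambda Y$, (iii) Proposition~3 relating $coHoch$ and $Hoch\circ\Omega$, and (iv) invariance of the Hochschild complex under quasi-isomorphisms of dga algebras. You have also correctly flagged the one subtlety the paper itself highlights, namely the role of the Kan condition in making the ``unhatted'' construction suffice.
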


\vspace{0.2in}

\bibliographystyle{plain}

\begin{thebibliography}{99}

\bibitem{Adams}
J. Adams,
{On the cobar construction,} \textit{Proc. Nat. Acad. Sci. U.S.A.}, \textbf{42} (1956), 409--412.



\bibitem{Cartier}  P. Cartier, Cohomologie des coalgebres,
 S\'{e}minare Sophus Lie, Expose 5
(1955--56).

\bibitem{Dold- Thom}
A. Dold and  R. Thom,
{Quasifaserungen und unendliche symmetrische Produkte,} \textit{Ann. of Math.}, \textbf{67} (1958), 239--281.

\bibitem{DS11}
D. Dugger and D. I. Spivak,
{Rigidification of quasi-categories,}
\textit{Algebr. Geom. Topol.}, \textbf{11}(1) (2001), 225--325.

\bibitem{Goodwillie}
T.G. Goodwillie,
{Cyclic homology, derivations and the free loop space,}
\textit{Topology}, \textbf{24} (1985), 187--215.

\bibitem{Hess}
K. Hess,
{The Hochschild complex of a twisting cochain,}
\textit{J. Algebra}, 451 (2016), 302--356.

\bibitem{HPS}
K. Hess, P. E. Parent, J. Scott,
{ CoHochschild homology of chain coalgebras,}
\textit{ J. Pure Appl. Alg.}, 213(4) (2009), 536--556.

\bibitem{Hess- Tonks}
K. Hess and A. Tonks,
{The loop group and the cobar construction,}
\textit{Proc. Amer. Math. Soc.}, \textbf{138} (2010), 1861--1876.



\bibitem{J-M.hoh} J.D.S. Jones and J. McCleary, Hochschild homology, cyclic homology,
and the cobar construction, Adams memorial symposium on algebraic topology, 1
(Manchester, 1990), \textit{London Math. Soc., Lecture Note Ser.}, 175 (1992), 53--65.



\bibitem{Loday1}  J.-L. Loday, Cyclic homology, second ed., Grundlehren der Mathematischen
Wissenschaften, 301, Springer, Berlin Heidelberg New York (1998).

\bibitem{Rivera- Zeinalian} M. Rivera and M. Zeinalian,
Cubical rigidification, the cobar construction, and the based loop space, arXiv:1612.04801v2, to appear in \textit{Algebraic and Geometric Topology.}


\bibitem{RS} M. Rivera and S. Saneblidze,
A combinatorial model for the path fibration, arXiv:1706.00983, submitted.


\bibitem{saneFREE} S. Saneblidze, The bitwisted Cartesian model for the free
loop fibration, \textit{Topology and Its Applications}, 156 (2009), 897--910.



\end{thebibliography}

\end{document}